\newtheorem{thm}{Theorem}[section]
\newtheorem{lemma}[thm]{Lemma}
\newtheorem{cor}[thm]{Corollary}
\newtheorem{fact}[thm]{Fact}
\newtheorem{df}[thm]{Definition}
\newcommand{\BE}{\mathbb {E}}  
\newcommand{\BK}{\mathbb{K}}  
\newcommand{\BN}{\mathbb{N}}
\newcommand{\cu}[1]{\mathcal{#1}}
\newcommand{\bo}[1]{\boldsymbol{#1}}
\newcommand{\sa}[1]{\mathsf{#1}}
\newcommand{\ti}[1]{\widetilde{#1}}
\def\indsym#1#2{%
  \setbox0=\hbox{$\m@th#1x$}%
  \kern\wd0%
  \hbox to 0pt{\hss$\m@th#1\mid$\hbox to 0pt{$\m@th#1^{#2}$}\hss}%
  \lower.9\ht0\hbox to 0pt{\hss$\m@th#1\smile$\hss}%
  \kern\wd0}
\def\nindsym#1#2{%
  \setbox0=\hbox{$\m@th#1x$}%
  \kern\wd0%
  \hbox to 0pt{\hss$\m@th#1\not$\kern1.4\wd0\hss}
  \hbox to 0pt{\hss$\m@th#1\mid$\hbox to 0pt{$\m@th#1^{\,#2}$}\hss}%
  \lower.9\ht0\hbox to 0pt{\hss$\m@th#1\smile$\hss}%
  \kern\wd0}
\def\dotminussym#1#2{%
  \setbox0=\hbox{$\m@th#1-$}%
  \kern.5\wd0%
  \hbox to 0pt{\hss\hbox{$\m@th#1-$}\hss}%
  \raise.6\ht0\hbox to 0pt{\hss$\m@th#1.$\hss}%
  \kern.5\wd0}
\def \<{\langle}
\def \>{\rangle}
\def \((  {(\!(}
\def \)) {)\!)}
\def \int{\operatorname{int}}
\def \range{\operatorname{range}}
\numberwithin{equation}{section}
\def \l{\llbracket}
\def \rr{\rrbracket}
\begin{document}

\title{Scattered Sentences have Few Separable Randomizations}

\author{Uri Andrews, Isaac Goldbring, Sherwood Hachtman, H. Jerome Keisler, and David Marker}

\address{University of Wisconsin-Madison, Department of Mathematics, Madison,  WI 53706-1388}
\email{andrews@math.wisc.edu}
\urladdr{www.math.wisc.edu/~andrews}
\email{keisler@math.wisc.edu}
\urladdr{www.math.wisc.edu/~keisler}

\address{University of California, Irvine, Department of Mathematics, Irvine, CA, 92697-3875}
\email{isaac@math.uci.edu}
\urladdr{www.math.uci.edu/~isaac}

\address {University of Illinois at Chicago, Department of Mathematics, Statistics, and Computer Science,
Science and Engineering Offices (M/C 249), 851 S. Morgan St., Chicago, IL 60607-7045, USA}
\email{marker@uic.edu}
\urladdr{www.math.uic.edu/~marker}
\email{hachtma1@uic.edu}
\urladdr{www.math.uic.edu/~shac}

\thanks{The work of Andrews was partially supported by NSF grant DMS-1600228.  The work of Goldbring was partially supported by NSF CAREER grant DMS-1708802.}

\begin{abstract}
In the paper \emph{Randomizations of Scattered Sentences}, Keisler showed that if Martin's axiom for aleph one holds, then every scattered sentence has few separable
randomizations, and asked whether the conclusion could be proved in ZFC alone.  We show here that the answer is ``yes''.  It follows that the absolute Vaught conjecture
holds if and only if every $L_{\omega_1\omega}$-sentence with few separable randomizations has  countably many countable models.
\end{abstract}

\maketitle

\section{Introduction}

This note answers a question posed in the paper [K2], and results from a discussion following a lecture by Keisler at the Midwest Model Theory meeting in Chicago on April 5, 2016.

Fix a countable first order signature $L$.  A sentence $\varphi$ of the infinitary logic $L_{\omega_1\omega}$ is \emph{scattered} if there is
no countable fragment $L_A$ of $L_{\omega_1\omega}$ such that $\varphi$ has a perfect set of countable models that are not $L_A$-equivalent.
Scattered sentences were introduced by Morley [M], motivated by Vaught's conjecture.  The absolute form of Vaught's conjecture for an $L_{\omega_1\omega}$-sentence
$\varphi$ says that if $\varphi$ is scattered then $\varphi$ has  countably many (non-isomorphic) countable models
\footnote{Here, countable means of cardinality at most $\aleph_0$.}.

In continuous logic, the \emph{pure randomization theory} $P^R$
(from [BK]) is a theory whose signature $L^R$ has a sort $\BK$ for random elements and a sort $\BE$ for events.  For each formula $\theta(\cdot)$ of $L$ with $n$ free variables,
$L^R$ has a function symbol $\l\theta(\cdot)\rr$ of sort $\BK^n\to\BE$ for the event at which $\theta(\cdot)$ is true.  $L^R$ also has
Boolean operations $\sqcup,\sqcap,\neg$ in the event sort, a predicate $\mu$ from events to $[0,1]$, and
distance predicates $d_\BK, d_\BE$ for each sort.  The set of axioms for $P^R$ is recursive in $L$.  It insures that the functions $\l\theta(\cdot)\rr$ respect validity, connectives,
and quantifiers, that each event is equal to the set where some pair of random elements agree,
and that $\mu$ is an atomless probability measure on the events. There are also axioms that define $d_\BK$ and $d_\BE$ in the natural way.

Pre-models of $P^R$ are called \emph{randomizations}, and models of $P^R$ are called \emph{complete randomizations}.
In Theorem 5.1 of [K2] (stated as Fact \ref{f-infinitary-model} below), it is shown that in a complete separable randomization, there is a unique mapping $\l\cdot\rr$ from $L_{\omega_1\omega}$-sentences
to events that respects validity, countable connectives, and quantifiers.
A \emph{separable randomization of} an $L_{\omega_1\omega}$-sentence $\varphi$ is a separable  randomization whose completion satisfies $\mu(\l\varphi\rr)=1$.
Intuitively, in a separable randomization of $\varphi$, a random element is obtained by randomly picking an
element of a random countable model of $\varphi$, with respect to some underlying probability space.  An especially simple kind of randomization of $\varphi$, called a
\emph{basic randomization}, has random elements picked from some fixed countable family of countable models of $\varphi$, with the underlying probability space being the Lebesgue
measure on the unit interval.  $\varphi$ is said to have \emph{few separable randomizations} if every complete separable randomization of $\varphi$ is isomorphic to a basic randomization.

The main results of [K2] are: If an $L_{\omega_1\omega}$-sentence $\varphi$ has  countably many countable models, then $\varphi$ has few separable randomizations.
If $\varphi$ has few separable randomizations,
then $\varphi$ is scattered.  If Martin's axiom for $\aleph_1$ holds and $\varphi$
is scattered, then $\varphi$ has few separable randomizations.  [K2] asks whether the conclusion of this last result can be proved in ZFC.  Here we will show that the answer to
that question is ``yes''.  The idea will be to use the Shoenfield absoluteness theorem  to eliminate the use of Martin's axiom.

The results in the preceding paragraph show that being scattered is equivalent to having few separable randomizations.   The absolute Vaught conjecture for $\varphi$ says that 
if $\varphi$ is scattered then $\varphi$ has countably many countable models.  Thus the absolute Vaught conjecture is equivalent to the property that
having few separable randomizations implies having countably many countable models.

\section{Background} \label{s-background}
 We refer to [BBHU] for background in continuous logic, [J] for background
on absoluteness and Martin's axiom, and [K1] for background on $L_{\omega_1\omega}$.  We assume throughout that  $\varphi$ is an
$L_{\omega_1\omega}$-sentence that implies $(\exists x)(\exists y) x\ne y$.
We will not need the formal statement of the axioms of $P^R$, or the formal definition of $\l\psi(\cdot)\rr$ for $L_{\omega_1\omega}$-formulas $\psi(\cdot)$.
In this section we will state the definitions and results from [K2] that we will need.

Given two pre-structures $\cu N$ and $\cu P $ with signature $L^R$, an \emph{isomorphism}  $h\colon\cu N\to\cu P$
is a mapping from $\cu N$ into $\cu P$ such that $h$ preserves the truth values of all formulas of $L^R$, and every element of $\cu P$ is at distance
zero from some element of $h(\cu N)$.  We call $\cu P$ a \textbf{reduction of} $\cu N$ if $\cu P$ is obtained from $\cu N$ by identifying elements at distance zero,
and call $\cu P$ a \textbf{completion of} $\cu N$ if $\cu P$ is a structure obtained from a reduction of $\cu N$ by completing the metrics.
Up to isomorphism, every pre-structure has a unique reduction and completion.
The mapping that identifies elements at distance zero is called the \textbf{reduction mapping}, and is an isomorphism from a pre-structure onto its reduction.

The axioms of $P^R$ have the following consequences:
$$d_\BK(\bf f,\bo g)=\mu(\l\bo f \ne \bo g\rr),\quad d_\BE(\sa A,\sa B)=\mu(\sa A\triangle \sa B).$$
$$\mu(\l(\exists x)(\exists y)x\ne y\rr)=1.$$
By the latter, every separable randomization is a separable randomization of $(\exists x)(\exists y)x\ne y.$
Since $P^R$ has axioms saying that the functions $\l\theta(\cdot)\rr$ for first order $\theta$ respect connectives, and that every event
is equal to $\l\bo a=\bo b\rr$ for some $\bo a, \bo b$, it follows that:

\begin{fact} \label{f-isomorphism}
Suppose $\cu N=(\cu K,\cu B)$ and $\cu N'=(\cu K',\cu B')$ are models of $P^R$, $h$ maps  $\cu K$ onto $\cu K'$, and
$$\cu N\models\mu(\l\theta(\vec{\bo a})\rr)\ge r \Leftrightarrow\cu N'\models\mu^{\cu N'}(\l\theta(h\vec{\bo a})\rr)\ge r$$
 for all first order $\theta$, tuples $\vec{\bo a}$ in $\cu K$, and rational $r$.  Then $h$ can be extended to a unique isomorphism from $\cu N$ onto
 $\cu N'$.
\end{fact}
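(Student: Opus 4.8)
The plan is to extend $h$ to the event sort by the rule
$h(\l\theta(\vec{\bo a})\rr)=\l\theta(h\vec{\bo a})\rr$
and then check that the resulting map preserves every primitive of $L^R$. Since the axioms of $P^R$ guarantee that every event of $\cu N$ has the form $\l\theta(\vec{\bo a})\rr$ for some first order $\theta$ and some tuple $\vec{\bo a}$ from $\cu K$ (indeed one may take $\theta$ to be $x=y$), this rule does determine a map on all of $\cu B$, \emph{provided it is well defined}. So first I would treat well-definedness, then preservation of the primitives, then surjectivity, and finally uniqueness.

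Well-definedness is where the hypothesis does its work. Suppose $\l\theta(\vec{\bo a})\rr=\l\theta'(\vec{\bo c})\rr$ in $\cu N$. Because $\cu N$ is a model, its event metric is a genuine metric, so this equality means $d_\BE(\l\theta(\vec{\bo a})\rr,\l\theta'(\vec{\bo c})\rr)=0$, i.e. $\mu(\l\theta(\vec{\bo a})\rr\triangle\l\theta'(\vec{\bo c})\rr)=0$. Because $\l\cdot\rr$ respects first order connectives, the symmetric difference is itself a first order event $\l\psi(\vec{\bo a},\vec{\bo c})\rr$, where $\psi$ abbreviates $(\theta\wedge\neg\theta')\vee(\neg\theta\wedge\theta')$. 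The hypothesis, taken in both directions over all rational $r$, yields exact equality of measures, and so transfers the value $0$ to $\cu N'$; hence $\l\theta(h\vec{\bo a})\rr=\l\theta'(h\vec{\bo c})\rr$, and $h$ is well defined on $\cu B$. Running the same computation in reverse shows $h$ is injective on events, and combining surjectivity of $h$ on $\cu K$ with the fact that every event of $\cu N'$ is likewise an equality event $\l\bo a'=\bo b'\rr$ shows that $h$ maps $\cu B$ onto $\cu B'$.

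It remains to verify that $h$ preserves the primitives, after which preservation of the value of every $L^R$-formula follows by the usual induction (connectives are continuous, and quantifiers, being suprema and infima over the domain, are preserved since $h$ is surjective). Preservation of each function $\l\theta(\cdot)\rr$ is immediate from the defining rule. For the Boolean operations one writes the two events as $\l\theta(\vec{\bo a})\rr$ and $\l\theta'(\vec{\bo c})\rr$ and uses that $\l\cdot\rr$ respects $\sqcap,\sqcup,\neg$; for instance $\l\theta(\vec{\bo a})\rr\sqcap\l\theta'(\vec{\bo c})\rr=\l(\theta\wedge\theta')(\vec{\bo a},\vec{\bo c})\rr$, so $h$ commutes with $\sqcap$, and similarly for $\sqcup$ and $\neg$. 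Preservation of $\mu$ is exactly the hypothesis, and preservation of the two metrics then follows from the identities $d_\BK(\bo f,\bo g)=\mu(\l\bo f\ne\bo g\rr)$ and $d_\BE(\sa A,\sa B)=\mu(\sa A\triangle\sa B)$, using preservation of $\mu$ and of the Boolean operations. Finally, uniqueness is forced: any isomorphism extending $h$ must commute with each $\l\theta(\cdot)\rr$, and since every event has the form $\l\theta(\vec{\bo a})\rr$, its action on $\cu B$ is pinned down to be precisely our rule.

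I expect the only genuine obstacle to be the well-definedness step; everything else is a direct computation from the three displayed consequences of the axioms. The crux is that equality of events must first be re-expressed metrically (as vanishing measure of a symmetric difference) and then pulled back through the measure-preservation hypothesis, which is exactly why the two axioms invoked here — that every event is a first order event, and that $\l\cdot\rr$ respects connectives — are indispensable.
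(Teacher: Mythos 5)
Your proof is correct and is essentially the argument the paper has in mind: the paper states this Fact without a detailed proof, deriving it precisely from the two axioms you invoke (that $\l\theta(\cdot)\rr$ respects connectives, used for your well-definedness and Boolean-preservation steps, and that every event is an agreement event $\l\bo a=\bo b\rr$, used for totality, surjectivity, and uniqueness). Your write-up is a faithful filling-in of that sketch, including the key observation that the two-sided hypothesis over all rational $r$ upgrades to exact equality of measures, which transfers measure-zero symmetric differences between the reduced structures.
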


The simplest examples of randomizations are the Borel randomizations, defined as follows.
Let $\cu L$ be the family of Borel subsets of $[0,1)$ and $\lambda$ be the restriction of Lebesgue measure to $\cu L$.

\begin{df}
The \emph{Borel randomization} of a model $\cu M\models(\exists x)(\exists y)x\ne y$ is the structure $(\cu M^{\cu L},\cu L)$ of sort $L^R$ where $\cu M^{\cu L}$ is the set of all functions $\bo f\colon[0,1)\to M$ with countable range
such that $\{t\mid \bo f(t)=a\}\in\cu L$ for each $a\in M$, $\cu L$ has the usual Boolean operations, $\mu$ is interpreted by $\lambda$, and
$$\l\theta(\vec{\bo f})\rr=\{t\mid \cu M\models \theta(\vec{\bo f}(t))\}.$$
\end{df}

A basic randomization of $\varphi$ is formed by ``gluing together'' countably many Borel randomizations of countable models of $\varphi$.

\begin{df} \label{def-basic-randomizations}  Suppose that
\begin{itemize}
\item $[0,1)=\bigcup_{n} \sa B_n$
is a partition of $[0,1)$ into countably many Borel sets of positive measure;
\item for each $n$, $\cu M_n$ is a countable model of $\varphi$;
\item $\prod_{n} \cu M_n^{\sa B_n}$ is the set of all functions $\bo f\colon[0,1)\to\bigcup_{n} M_n$
such that for all $n$,
$$(\forall t\in \sa B_n)\bo f(t)\in M_n\mbox{ and } (\forall a\in M_n) \{t\in \sa B_n\mid \bo f(t)=a\}\in\cu L;$$
\item  $(\prod_{n} \cu M_n^{\sa B_n},\cu L)$ has the usual Boolean operations, $\mu$ is interpreted by $\lambda$, and
the $\llbracket \theta(\cdot)\rrbracket$ functions are
 $$\llbracket \theta(\vec{\bo f})\rrbracket=\bigcup_{n}\{t\in\sa B_n\mid\cu M_n\models\theta(\vec{\bo f}(t))\}.$$
\end{itemize}
$(\prod_{n} \cu M_n^{\sa B_n},\cu L)$ is called a \emph{basic randomization} of $\varphi$.
\end{df}

\begin{fact}  \label{f-infinitary-model}  (Theorem 5.1 in [K2])
Let $\cu P=(\cu K,\cu E)$ be a complete separable randomization, and let $\Psi_n$ be the class of
$L_{\omega_1\omega}$ formulas with $n$ free variables.  There is a unique family of
functions $\llbracket\psi(\cdot)\rrbracket^\cu P$, $\psi\in\bigcup_n\Psi_n$, such that:
\begin{itemize}
\item[(i)] When $\psi\in\Psi_n$,
$\llbracket\psi(\cdot)\rrbracket^\cu P\colon\cu K^n\to\cu E.$
\item[(ii)] When $\psi$ is a first order formula, $\llbracket\psi(\cdot)\rrbracket^\cu P$ is the
usual event function for the structure $\cu P$.
\item[(iii)] $\llbracket \neg\psi(\vec{\bo f})\rrbracket^\cu P=\neg\llbracket \psi(\vec{\bo f})\rrbracket^\cu P$.
\item[(iv)] $\llbracket (\psi_1\vee\psi_2)(\vec{\bo f})\rrbracket^\cu P=
\llbracket \psi_1(\vec{\bo f})\rrbracket^\cu P\sqcup\llbracket \psi_2(\vec{\bo f})\rrbracket^\cu P.$
\item[(v)] $\llbracket\bigvee_k\psi_k(\vec{\bo f})\rrbracket^\cu P=\sup_k\llbracket\psi_k(\vec{\bo f})\rrbracket^\cu P.$
\item[(vi)] $\llbracket(\exists u)\theta(u,\vec{\bo f})\rrbracket^\cu P=
\sup_{\bo g\in\cu K}\llbracket\theta(\bo g,\vec{\bo f})\rrbracket^\cu P.$
\end{itemize}
Moreover, for each $\psi\in\Psi_n$, the function $\llbracket\psi(\cdot)\rrbracket^\cu P$ is Lipschitz continuous with bound one, that is,
for any pair of $n$-tuples $\vec{\bo f}, \vec{\bo h}\in\cu K^n$ we have
$$ d_\BE(\llbracket\psi(\vec{\bo f})\rrbracket^\cu P,\llbracket\psi(\vec{\bo h})\rrbracket^\cu P)\le\sum_{m<n} d_\BK(\bo f_m,\bo h_m).$$
\end{fact}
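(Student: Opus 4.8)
The plan is to prove the existence and uniqueness of the family $\llbracket\psi(\cdot)\rrbracket^{\cu P}$ by induction on the complexity of the $L_{\omega_1\omega}$-formula $\psi$, using clauses (ii)--(vi) as the defining recursion and then verifying the Lipschitz bound as an invariant that propagates through the induction. First I would set up the induction so that properties (i) and the Lipschitz continuity with bound one are part of the inductive hypothesis at every stage; this is important because the key analytic content---that the suprema in (v) and (vi) actually land in $\cu E$ and define honest functions $\cu K^n\to\cu E$---depends on having separability and completeness of $\cu P$ in hand. The base case is clause (ii), where $\llbracket\psi(\cdot)\rrbracket^{\cu P}$ is the event function coming from the $L^R$-structure on $\cu P$; its Lipschitz bound follows from the axioms of $P^R$, specifically from the consequence $d_\BK(\bo f,\bo g)=\mu(\llbracket\bo f\ne\bo g\rr)$ together with the first-order congruence axioms.

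The inductive steps for the finitary connectives $\neg$ and $\vee$ (clauses (iii) and (iv)) are routine: negation is complementation in $\cu E$, which is a $d_\BE$-isometry, and finite join is a $1$-Lipschitz operation on $(\cu E,d_\BE)$, so the Lipschitz bound is preserved and no convergence issue arises. The substantive steps are the countable disjunction (v) and the existential quantifier (vi), where one must show that $\sup_k\llbracket\psi_k(\vec{\bo f})\rr^{\cu P}$ and $\sup_{\bo g\in\cu K}\llbracket\theta(\bo g,\vec{\bo f})\rr^{\cu P}$ exist in $\cu E$. Here I would exploit that $\cu P$ is a \emph{complete} separable randomization: the measure algebra $(\cu E,d_\BE)$ is a complete metric space in which monotone suprema of sequences of events exist, so the countable supremum in (v) is well defined as an element of $\cu E$. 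For (vi) the supremum ranges over the possibly uncountable set $\cu K$, so I would first use separability of $\cu K$ to reduce to a supremum over a countable dense subset---the $1$-Lipschitz continuity of $\llbracket\theta(\bo g,\vec{\bo f})\rr^{\cu P}$ in $\bo g$ (already available from the inductive hypothesis) guarantees that a countable dense set of $\bo g$'s computes the same supremum---and then invoke completeness of $\cu E$ exactly as in the (v) case.

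I expect the main obstacle to be the verification that the Lipschitz bound is inherited by the quantifier and infinite-disjunction steps, rather than the mere existence of the suprema. The argument here is that the supremum of a family of functions each of which is $1$-Lipschitz (with respect to the metric $d_\BE$ on the target and the sum-of-$d_\BK$ pseudometric on $\cu K^n$) is again $1$-Lipschitz, which is a general fact about suprema of equi-Lipschitz families; the care needed is to check that this general fact transfers correctly to the measure-algebra setting where $\sup$ is the lattice supremum rather than a pointwise real supremum, and to handle the extra free-variable coordinate $\bo g$ in (vi) so that the bound $\sum_{m<n}d_\BK(\bo f_m,\bo h_m)$ comes out with the right constant. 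For uniqueness, I would argue that clauses (ii)--(vi) determine $\llbracket\psi(\cdot)\rr^{\cu P}$ on every formula by the same induction: any two families satisfying (ii)--(vi) agree on first-order formulas by (ii), and agreement is preserved under each clause, so by induction on $\psi$ they coincide, with the Lipschitz bound ensuring that the suprema involved are computed unambiguously.
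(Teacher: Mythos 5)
First, a point of comparison: this statement is quoted in the paper as a Fact (Theorem 5.1 of [K2]) and is not proved there, so there is no proof in the paper itself to measure you against; your proposal has to stand on its own. Its skeleton is sound and is surely close to the intended argument: define $\llbracket\psi(\cdot)\rrbracket^{\cu P}$ by recursion on $\psi$ using (ii)--(vi), use metric completeness of $\cu E$ to get the increasing countable suprema in (v), use separability of $\cu K$ plus continuity in $\bo g$ to reduce the supremum in (vi) to a countable one, and get uniqueness by the same induction.

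The genuine gap is in your propagation of the Lipschitz bound. The ``general fact about suprema of equi-Lipschitz families'' that you invoke is \emph{false} for maps into a measure algebra with the metric $d_\BE(\sa A,\sa B)=\mu(\sa A\triangle \sa B)$. For a counterexample, work in the measure algebra of $[0,1]$ and let $F_k(x)$ be the event $[a_k,\,a_k+\min(x,2^{-k}))$, where the intervals $[a_k,a_k+2^{-k})$, $k\ge 1$, are pairwise disjoint; each $F_k$ is $1$-Lipschitz in $x$, but $\mu\bigl(\sup_k F_k(2^{-j})\bigr)=(j+1)2^{-j}$, so $\sup_k F_k$ is not Lipschitz with any constant near $0$. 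The problem already appears at clause (iv): from the purely metric information $\mu(\sa A_i\triangle \sa B_i)\le\epsilon$ one only gets $\mu((\sa A_1\sqcup \sa A_2)\triangle(\sa B_1\sqcup \sa B_2))\le 2\epsilon$, so the constant doubles at each finite join, and after nesting countable disjunctions the constants blow up unboundedly --- at which point even \emph{continuity} is lost, and continuity is exactly what your step (vi) needs to pass to a countable dense subset of $\cu K$. The repair is to run the induction with a stronger, event-wise invariant: for every $\psi\in\Psi_n$ and all tuples,
$$\llbracket\psi(\vec{\bo f})\rrbracket^{\cu P}\,\triangle\,\llbracket\psi(\vec{\bo h})\rrbracket^{\cu P}\ \le\ \bigsqcup_{m<n}\llbracket\,\bo f_m\ne\bo h_m\,\rrbracket$$
in the Boolean ordering of $\cu E$. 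This holds for first order $\psi$ by the $P^R$ axioms (apply the validity and connective axioms to the valid implication $\bigl(\bigwedge_m u_m=v_m\wedge\psi(\vec u)\bigr)\to\psi(\vec v)$), is preserved by negation since $\neg \sa A\triangle\neg \sa B=\sa A\triangle \sa B$, by finite and countable joins since $\sup_k \sa A_k\triangle\sup_k \sa B_k\le\sup_k(\sa A_k\triangle \sa B_k)$, and by the existential clause since the extra coordinate contributes only $\llbracket\bo g\ne\bo g\rrbracket=\bot$. Taking $\mu$ and using subadditivity then yields the stated Lipschitz bound with constant one, and with this invariant in place the rest of your argument (existence, the reduction of (vi) to a countable supremum, and uniqueness) goes through as you describe.
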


\begin{df} Let $\cu N$ be a separable randomization with completion $\cu P$, and $\varphi$ be an $L_{\omega_1\omega}$-sentence.  We write
$$\mu^{\cu N}(\l\varphi\rr)=\mu^{\cu P}(\l\varphi\rr)=\mu(\l\varphi\rr^{\cu P}).$$
If $\mu^{\cu N}(\l\varphi\rr)=1$, we say that $\cu N$ is a \emph{randomization of} $\varphi$.

We say that $\varphi$ has \emph{few separable randomizations} if every complete separable randomization of $\varphi$ is isomorphic to a basic randomization of $\varphi$.
\end{df}

\begin{fact}  \label{f-basic}  ([K2], Lemma 4.3 and Theorem 4.6.)
Every basic randomization of $\varphi$ is isomorphic to its reduction, which is a complete separable randomization of $\varphi$
(and thus a model of $P^R$).
\end{fact}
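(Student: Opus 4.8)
The plan is to verify, in order, that a basic randomization $\cu{N}=(\prod_n\cu{M}_n^{\sa{B}_n},\cu{L})$ satisfies the axioms of $P^R$ as a pre-structure, that its reduction is complete and separable, and that $\mu(\l\varphi\rr)=1$ in the resulting model. The assertion that $\cu N$ is isomorphic to its reduction is immediate, since the reduction mapping of any pre-structure is an isomorphism; the genuine content is that this reduction is a complete separable randomization of $\varphi$.

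First I would check the $P^R$ axioms directly from the defining formula $\l\theta(\vec{\bo f})\rr=\bigcup_n\{t\in\sa{B}_n\mid\cu{M}_n\models\theta(\vec{\bo f}(t))\}$. That $\l\cdot\rr$ respects validity and the Boolean connectives is immediate blockwise, and $\mu=\lambda$ is visibly an atomless probability measure with $d_\BK,d_\BE$ given by the required formulas. Two axioms need an argument. For the axiom that every event equals $\l\bo a=\bo b\rr$: given $\sa A\in\cu L$, use the standing hypothesis $\varphi\models(\exists x)(\exists y)\,x\ne y$ to pick distinct $c_n,d_n\in M_n$, and set $\bo a\equiv c_n$ on $\sa B_n$, with $\bo b=c_n$ on $\sa A\cap\sa B_n$ and $\bo b=d_n$ on $\sa B_n\setminus\sa A$, so that $\l\bo a=\bo b\rr=\sa A$. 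For the quantifier axiom $\l(\exists u)\theta(u,\vec{\bo f})\rr=\sup_{\bo g}\l\theta(\bo g,\vec{\bo f})\rr$, the inequality $\ge$ is clear, and for $\le$ I would, within each block $\sa B_n$, use countability of $M_n$ to partition $\{t\in\sa B_n:\cu M_n\models(\exists u)\theta(u,\vec{\bo f}(t))\}$ into countably many measurable pieces on each of which a fixed element of $M_n$ serves as a witness, thereby assembling a $\bo g\in\prod_n\cu M_n^{\sa B_n}$ realizing the supremum.

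Next I would establish completeness and separability of the reduction. For completeness, a $d_\BK$-Cauchy sequence has a subsequence $(\bo f_k)$ with $\lambda(\bo f_{k+1}\ne\bo f_k)<2^{-k}$; by Borel--Cantelli the pointwise limit $\bo f(t)=\lim_k\bo f_k(t)$ exists for almost every $t$, takes values in $M_n$ on $\sa B_n$, and has measurable fibers, so after fixing arbitrary values on a null set it lies in $\prod_n\cu M_n^{\sa B_n}$ and is the $d_\BK$-limit; the event sort is the Lebesgue measure algebra, which is complete under $d_\BE=\mu(\cdot\triangle\cdot)$. Thus the reduction coincides with its completion and is a model of $P^R$. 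For separability I would exhibit the countable family of step functions taking finitely many values from $\bigcup_n M_n$ on finite unions of dyadic subintervals; density follows by approximating each measurable fiber of a given $\bo f$ by dyadic unions and truncating its countable range to a finite set outside a set of small measure.

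Finally, to see $\mu(\l\varphi\rr)=1$, I would prove by induction on the complexity of $\psi\in L_{\omega_1\omega}$ that the blockwise formula $\bigcup_n\{t\in\sa B_n:\cu M_n\models\psi(\vec{\bo f}(t))\}$ satisfies clauses (i)--(vi) of Fact~\ref{f-infinitary-model}; the only nonroutine clauses are (v), where the countable supremum in the measure algebra agrees with the countable union, and (vi), which reuses the measurable-selection argument above. By the uniqueness in Fact~\ref{f-infinitary-model} this formula is $\l\psi(\cdot)\rr^{\cu P}$, and applying it to the sentence $\varphi$ gives $\l\varphi\rr=\bigcup_n\sa B_n=[0,1)$, since each $\cu M_n\models\varphi$, whence $\mu(\l\varphi\rr)=1$. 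I expect the main obstacle throughout to be this existential/measurable-selection step: producing, in a measurable way and with range inside the prescribed product, a random element that witnesses the supremum over all random elements; that single idea underlies both the quantifier axiom and clause (vi) of the induction.
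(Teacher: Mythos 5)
The paper itself gives no proof of this Fact: it is imported from [K2] (Lemma 4.3 and Theorem 4.6), so your attempt can only be measured against the standard direct verification given there, which is essentially the route you take: check the $P^R$ axioms blockwise, show the reduction is already complete, and identify $\l\psi(\cdot)\rr$ with the blockwise satisfaction sets via the uniqueness clause of Fact~\ref{f-infinitary-model}. Your handling of the two nontrivial axioms (every event as an agreement set $\l\bo a=\bo b\rr$, and an exact witness for the quantifier axiom obtained by disjointifying the sets $\{t\in\sa B_n \mid \cu M_n\models\theta(a,\vec{\bo f}(t))\}$ over an enumeration of $M_n$), the Borel--Cantelli argument for completeness of the $\BK$-sort, and the final computation $\l\varphi\rr=\bigcup_n\sa B_n=[0,1)$, hence $\mu(\l\varphi\rr)=1$, are all correct.

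One step fails as written: the separability argument. Your proposed countable dense set --- step functions taking finitely many values in $\bigcup_n M_n$ on finite unions of dyadic intervals --- is in general not a subset of $\prod_n\cu M_n^{\sa B_n}$, and need not contain any member of it. The blocks $\sa B_n$ are arbitrary Borel sets of positive measure (each may meet every dyadic interval in positive measure), and the models $M_n$ may be pairwise disjoint; a function equal to $a\in M_m$ on a dyadic interval meeting some $\sa B_n$ with $a\notin M_n$ violates the requirement $(\forall t\in\sa B_n)\,\bo f(t)\in M_n$. The repair is routine but must respect the blocks: fix $c_n\in M_n$ for each $n$, and take the countable family of functions determined by a bound $N\in\BN$, finitely many rational-endpoint interval unions $\sa A_{n,i}$, and values $a_{n,i}\in M_n$ for $n\le N$, where the function equals $a_{n,i}$ on $\sa B_n\cap(\sa A_{n,i}\setminus\bigcup_{j<i}\sa A_{n,j})$ and equals $c_n$ elsewhere on $\sa B_n$ (in particular on every block with $n>N$). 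Density then follows exactly by your truncation-and-approximation argument, choosing $N$ with $\lambda(\bigcup_{n>N}\sa B_n)$ small and approximating the fibers of a given $\bo f$ inside each $\sa B_n$ separately. (Separability of the event sort, which you do not address, is the standard fact that rational interval unions are dense in the measure algebra.) With this correction your proof is complete.
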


\begin{fact} \label{f-9.4}  (Lemma 9.4 in [K2])  Let $(\prod_{j\in J} \cu M_j^{\sa B_j},\cu L)$ be a basic randomization.  For each $j\in J$, let $\delta_j$ be a
Scott sentence of $\cu M_j$.  Then for each complete separable randomization $\cu P$ of $\varphi$, the following are equivalent.
\begin{itemize}
\item $\cu P$ is isomorphic to $(\prod_{j\in J} \cu M_j^{\sa B_j},\cu L)$.
\item $\mu^{\cu P}(\l\delta_n\rr)=\lambda(\sa B_j)$ for each $j\in J$.
\end{itemize}
\end{fact}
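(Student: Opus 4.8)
The statement is an equivalence, so I would prove the two implications separately, treating $(1)\Rightarrow(2)$ as a direct computation and $(2)\Rightarrow(1)$ as the substantive half. Throughout I assume (as the clean form of the statement requires) that the models $\cu M_j$ are pairwise non-isomorphic and that each $\delta_j$ is a \emph{canonical} Scott sentence, so that for any structure $\cu N$ one has $\cu N\models\delta_j$ iff $\cu N\equiv_{\infty\omega}\cu M_j$; in particular $\delta_j\wedge\delta_k$ is logically false for $j\ne k$, since a model would force $\cu M_j\equiv_{\infty\omega}\cu M_k$ and hence (both being countable, by Scott's theorem) $\cu M_j\cong\cu M_k$.

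For $(1)\Rightarrow(2)$ I would first prove, by induction on the complexity of $\psi\in\Psi_n$, that in the basic randomization $\cu Q=(\prod_j\cu M_j^{\sa B_j},\cu L)$ the event functions are computed fiberwise:
$$\l\psi(\vec{\bo f})\rr^{\cu Q}=\bigcup_n\{t\in\sa B_n\mid\cu M_n\models\psi(\vec{\bo f}(t))\}.$$
The first-order case is the definition of $\cu Q$ (Definition \ref{def-basic-randomizations}), and clauses (iii)--(vi) of Fact \ref{f-infinitary-model} match exactly the pointwise Boolean operations, countable suprema, and existential quantifiers on the right; the only delicate clause is (vi), where one must check that the supremum over $\bo g\in\cu K^{\cu Q}$ is attained fiberwise. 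This uses that each $M_n$ is countable together with the closure of $\prod_j\cu M_j^{\sa B_j}$ under measurable gluing, which lets one select, measurably in $t$, a witness $\bo g(t)$ wherever $\cu M_n\models(\exists u)\theta(u,\vec{\bo f}(t))$. By the uniqueness in Fact \ref{f-infinitary-model}, the displayed formula is $\l\cdot\rr^{\cu Q}$. Applying it to $\delta_j$ and using $\cu M_n\models\delta_j\iff n=j$ gives $\l\delta_j\rr^{\cu Q}=\sa B_j$, so $\mu^{\cu Q}(\l\delta_j\rr)=\lambda(\sa B_j)$; since $\cu P\cong\cu Q$, the same holds in $\cu P$.

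For $(2)\Rightarrow(1)$ I would argue in three steps. First, the events $\l\delta_j\rr^{\cu P}$ form a measurable partition of the measure algebra of $\cu P$ modulo null sets: they are pairwise disjoint because $\delta_j\wedge\delta_k$ is false and $\l\cdot\rr$ respects validity, so $\l\delta_j\rr\sqcap\l\delta_k\rr=\l\delta_j\wedge\delta_k\rr=\emptyset$, and they cover because, by disjointness, $\mu^{\cu P}(\sup_j\l\delta_j\rr)=\sum_j\mu^{\cu P}(\l\delta_j\rr)=\sum_j\lambda(\sa B_j)=1$. Second, I would localize: restricting $\cu P$ to the event $\l\delta_j\rr$ yields a complete separable randomization $\cu P_j$ in which $\l\delta_j\rr$ has full measure, and I would show $\cu P_j$ is isomorphic to the reduction of the Borel randomization $\cu M_j^{\sa B_j}$. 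This is the heart of the matter: a separable randomization whose generic fibre is forced to be $\cong\cu M_j$ must be the Borel randomization of $\cu M_j$. I would establish it through Fact \ref{f-isomorphism}, using Fact \ref{f-infinitary-model} to reduce the matching of $L_{\omega_1\omega}$-events to first-order events, and building the required surjection on random elements by measurable selection: since $M_j$ is countable, each random element can be coded by the measurable assignment $t\mapsto$ (its type over the fibre at $t$), and the atomlessness of $\mu$ provides enough room to realize any prescribed first-order event-measure. Third, I would glue the isomorphisms $\cu P_j\cong\cu M_j^{\sa B_j}$ across the partition $\{\l\delta_j\rr\}\leftrightarrow\{\sa B_j\}$, whose pieces have matching measures $\lambda(\sa B_j)$, into a single map $\cu P\to\cu Q$ and verify the hypothesis of Fact \ref{f-isomorphism}, which upgrades it to an isomorphism.

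The main obstacle is the second step, the single-type rigidity. Reducing to first-order events via Fact \ref{f-infinitary-model} and invoking Fact \ref{f-isomorphism} is routine once the correspondence of random elements is in hand, but producing that correspondence requires a genuine measurable-selection (or back-and-forth on a countable dense set) argument, and care is needed to handle automorphisms of $\cu M_j$, which render the coding of a random element by its fibrewise type non-canonical. This is precisely where separability and the atomlessness of $\mu$ must be used, and where I expect the bulk of the technical work to lie.
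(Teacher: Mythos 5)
The first thing to note is that the paper you are being compared against does not prove this statement at all: it is imported verbatim from [K2] (Lemma 9.4 there) as a background Fact and used as a black box, e.g.\ in the proof of Lemma \ref{L2}. So the only meaningful benchmark is the development in [K2]. Against that benchmark, your direction $(1)\Rightarrow(2)$ is correct and essentially complete: the fiberwise description of the infinitary event functions in a basic randomization, verified against clauses (iii)--(vi) and pinned down by the uniqueness assertion of Fact \ref{f-infinitary-model}, with a least-witness measurable selection (possible because each $M_n$ is countable and the relevant sets are Borel) handling the quantifier clause, is the right argument. You are also correct that the statement must be read with the $\cu M_j$ pairwise non-isomorphic; otherwise $(1)\Rightarrow(2)$ already fails for $\cu P=(\prod_j \cu M_j^{\sa B_j},\cu L)$ itself, since then $\mu(\l\delta_j\rr)$ is the sum of $\lambda(\sa B_k)$ over all $k$ with $\cu M_k\cong\cu M_j$. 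Likewise your step 1 (the $\l\delta_j\rr^{\cu P}$ are pairwise disjoint with total measure one) and step 3 (gluing local isomorphisms along matched partitions, which is where hypothesis (2) actually enters) are sound in outline.

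The genuine gap is your step 2, and you half-concede it. The claim that a complete separable randomization concentrating on a single Scott sentence $\delta_j$ is isomorphic to the Borel randomization of $\cu M_j$ is not a technical lemma to be dispatched by ``measurable selection'': it is exactly the $|J|=1$ instance of the Fact you are proving, and in [K2] it is a standalone theorem (the core of the relevant section, resting on the prior structure theory of separable randomizations) whose proof constitutes the bulk of the work. Your sketch --- code each random element by the measurable assignment $t\mapsto$ its fibrewise type, then invoke atomlessness ``to realize any prescribed first-order event-measure'' --- does not go through as stated, for precisely the reason you flag: automorphisms of $\cu M_j$ mean the fibrewise type does not determine an element of $M_j$, and no one-shot selection repairs this. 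What is actually needed is the Scott analysis of $\cu M_j$ (the orbit-isolating formulas $\sigma_{\vec a}$ for all finite tuples $\vec a$), the observation that for every random tuple $\vec{\bo f}$ the events $\l\sigma_{\vec a}(\vec{\bo f})\rr$ indexed by orbit representatives partition the event algebra, and then a back-and-forth construction over countable dense sets of random elements, interleaved with measure-algebra splitting, to build the surjection $h$ that Fact \ref{f-isomorphism} can upgrade to an isomorphism. Since that argument is entirely absent, your proposal reduces the Fact to an unproven sub-theorem of essentially the same depth: the architecture matches [K2], but the proof is not there.
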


\begin{fact}  \label{f-9.5}  (Lemma 9.5 in [K2])
$\varphi$ has few separable randomizations if and only if for every complete separable randomization (or every countable randomization)
$\cu N$ of $\varphi$ there is a Scott sentence $\delta$ such that $\mu^{\cu N}(\l\delta\rr)>0.$
\end{fact}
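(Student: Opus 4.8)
The plan is to prove the two directions separately; the forward direction falls straight out of Facts \ref{f-basic} and \ref{f-9.4}, while the backward direction rests on a conditioning construction. For the forward implication I would assume $\varphi$ has few separable randomizations and take an arbitrary complete separable randomization $\cu N$ of $\varphi$. By hypothesis $\cu N$ is isomorphic to some basic randomization $(\prod_{j}\cu M_j^{\sa B_j},\cu L)$, and Fact \ref{f-9.4} then gives $\mu^{\cu N}(\l\delta_j\rr)=\lambda(\sa B_j)$, where $\delta_j$ is a Scott sentence of $\cu M_j$; since every $\sa B_j$ has positive measure, any one $\delta_j$ witnesses the right-hand side. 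The countable-randomization version of the statement follows at once, since a countable randomization and its completion assign the same value $\mu^{\cu N}(\l\delta\rr)=\mu^{\cu P}(\l\delta\rr)$ to each Scott sentence.

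For the backward implication I would fix a complete separable randomization $\cu N=(\cu K,\cu E)$ of $\varphi$ and collect the set $\Delta$ of Scott sentences $\delta$ with $\mu^{\cu N}(\l\delta\rr)>0$. If $\delta,\delta'$ are Scott sentences of non-isomorphic countable models, then $\delta\wedge\delta'$ has no model (Scott's theorem), so the validity- and connective-respecting properties of $\l\cdot\rr$ force $\l\delta\rr\sqcap\l\delta'\rr$ to have measure zero; the events $\{\l\delta\rr:\delta\in\Delta\}$ are therefore pairwise disjoint modulo null sets, and since $\mu$ is a probability measure $\Delta$ must be countable. I would enumerate $\Delta=\{\delta_n\}$, write $\cu M_n$ for the countable model with Scott sentence $\delta_n$ (noting $\cu M_n\models\varphi$, since $\mu^{\cu N}(\l\varphi\rr)=1$ makes $\l\varphi\wedge\delta_n\rr$ have positive measure, hence satisfiable), and set $\sa C=\neg\bigsqcup_n\l\delta_n\rr$.

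Next I would split on $\mu(\sa C)$. If $\mu(\sa C)=0$, the disjoint events $\l\delta_n\rr$ exhaust the measure, so $\sum_n\mu^{\cu N}(\l\delta_n\rr)=1$ and I can partition $[0,1)$ into Borel sets $\sa B_n$ with $\lambda(\sa B_n)=\mu^{\cu N}(\l\delta_n\rr)>0$; the basic randomization $(\prod_n\cu M_n^{\sa B_n},\cu L)$ then agrees with $\cu N$ on every $\l\delta_n\rr$, so Fact \ref{f-9.4} identifies $\cu N$ with it and we are done. If instead $\mu(\sa C)>0$, I would condition $\cu N$ on $\sa C$, forming the randomization $\cu N_{\sa C}$ whose events are the $\sa A\sqcap\sa C$ with normalized measure $\mu(\cdot)/\mu(\sa C)$ and whose random elements and first-order event functions are those of $\cu N$ restricted to $\sa C$. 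Since $\mu^{\cu N}(\l\varphi\rr\sqcap\sa C)=\mu(\sa C)$, this $\cu N_{\sa C}$ is again a complete separable randomization of $\varphi$, yet every Scott sentence $\delta$ has $\mu^{\cu N_{\sa C}}(\l\delta\rr)=0$: the event $\l\delta\rr\sqcap\sa C$ is the zero event when $\delta=\delta_n$ and has measure zero otherwise, because then $\mu^{\cu N}(\l\delta\rr)=0$. This contradicts the hypothesis applied to $\cu N_{\sa C}$, so the case $\mu(\sa C)>0$ cannot arise and $\cu N$ is isomorphic to a basic randomization.

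The main obstacle is justifying the conditioning step: I must check that $\cu N_{\sa C}$ is genuinely a complete separable randomization and that its infinitary event map is the literal restriction $\l\psi\rr^{\cu N_{\sa C}}=\l\psi\rr^{\cu N}\sqcap\sa C$, for it is this identity that makes $\mu^{\cu N_{\sa C}}(\l\delta\rr)=\mu(\l\delta\rr\sqcap\sa C)/\mu(\sa C)$. By the uniqueness clause of Fact \ref{f-infinitary-model} it suffices to verify that $\psi\mapsto\l\psi\rr^{\cu N}\sqcap\sa C$ satisfies clauses (i)--(vi) on $\cu N_{\sa C}$. The Boolean clauses (i)--(iv) are routine, and the supremum clauses (v) and (vi) amount to meeting with the fixed event $\sa C$ commuting with the relevant suprema; since $\cu N$ is separable its event algebra is ccc, every supremum there is attained by a countable subfamily, and meet distributes over countable joins, which handles (v). The genuinely delicate point is (vi): I must confirm that taking the supremum over the random elements of $\cu N_{\sa C}$ does not exceed $\bigl(\sup_{\bo g\in\cu K}\l\theta(\bo g,\vec{\bo f})\rr^{\cu N}\bigr)\sqcap\sa C$, which I expect to follow from the fullness axioms that govern how existential quantifiers are witnessed in a complete randomization.
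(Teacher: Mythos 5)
Your proposal cannot be checked against an in-paper argument, because the paper does not prove this statement: it is imported verbatim as a Fact from [K2] (Lemma 9.5). Judged on its own terms, and against the machinery this paper does deploy, your proof is essentially correct and is the natural argument. The forward direction via Fact \ref{f-9.4} and the positivity of the $\lambda(\sa B_j)$ is exactly right. In the backward direction your skeleton is sound: (1) events $\l\delta\rr$ of Scott sentences of non-isomorphic models are disjoint modulo null sets because $\l\cdot\rr$ respects validity, so only countably many isomorphism types receive positive measure; (2) if the residual event $\sa C$ is null, Fact \ref{f-9.4} applied to a partition with $\lambda(\sa B_n)=\mu^{\cu N}(\l\delta_n\rr)$ finishes, and your observation that each $\cu M_n\models\varphi$ (positive measure of $\l\varphi\wedge\delta_n\rr$ implies satisfiability, again by respect of validity) is genuinely needed there; (3) if $\mu(\sa C)>0$, conditioning contradicts the hypothesis applied to $\cu N|\sa C$. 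As you say, the whole weight rests on the conditioning step: that $\cu N|\sa C$ is a complete separable randomization of $\varphi$ and that $\l\psi\rr^{\cu N|\sa C}=\l\psi\rr^{\cu N}\sqcap\sa C$ for infinitary $\psi$. It is worth noting that this paper itself uses exactly these unproved properties of $\cu N|\sa C$ in its proof of Lemma \ref{L2} (where $\mu^{\cu N|\sa C}(\l\delta\rr)=1$ is inferred from $\sa C=\l\delta\rr$ and Fact \ref{f-9.4} is applied to $\cu N|\sa C$), so your proof stands on the same footing as the authors' own arguments, with the missing details residing in [K2]. Two refinements to your verification sketch: well-definedness of the candidate family $\psi\mapsto\l\psi\rr^{\cu N}\sqcap\sa C$ on the conditioned structure requires the Lipschitz bound in its event-containment form, $\l\psi(\vec{\bo f})\rr\triangle\l\psi(\vec{\bo h})\rr\le\bigsqcup_m\l\bo f_m\ne\bo h_m\rr$, provable by induction on $\psi$, not merely the metric inequality stated in Fact \ref{f-infinitary-model}; and clause (vi) needs no appeal to ``fullness'' axioms --- every random element of $\cu N|\sa C$ is the class of an element of $\cu K$ (conditional Cauchy sequences can be glued back into $\cu K$), so the only issue is commuting $\sqcap\,\sa C$ past the supremum, which your ccc-plus-distributivity argument already handles.
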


\begin{fact}  \label{f-10.1}  (Theorem 10.1 in [K2]).
If $\varphi$ has few separable randomizations, then $\varphi$ is scattered.
\end{fact}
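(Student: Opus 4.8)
The plan is to prove the contrapositive: assuming $\varphi$ is not scattered, I will exhibit a single complete separable randomization $\cu N$ of $\varphi$ with $\mu^{\cu N}(\l\delta\rr)=0$ for every Scott sentence $\delta$; by Fact~\ref{f-9.5} this shows $\varphi$ does not have few separable randomizations. Since $\varphi$ is not scattered, fix a countable fragment $L_A$ and a perfect set of countable models of $\varphi$ that are pairwise non-$L_A$-equivalent. Isomorphic structures are $L_A$-equivalent, so these models are pairwise non-isomorphic; as there are only countably many finite structures up to isomorphism, after shrinking the perfect set I may assume every model in it is infinite, with universe $\omega$. Coding structures on $\omega$ in the usual Polish space $X_L$, I obtain a perfect set $P\subseteq X_L$ and, for $t\in P$, the structure $\cu M_t\models\varphi$ coded by $t$, with $\cu M_s\cong\cu M_t$ only when $s=t$. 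Fix an atomless Borel probability measure $\nu$ on $P$, which exists because $P$ is perfect.

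Next I would build the randomization $\cu N=(\cu K,\cu E)$ over the probability space $(P,\nu)$, in direct analogy with the Borel and basic randomizations: let $\cu E$ be the measure algebra of $(P,\nu)$ with the usual Boolean operations and $\mu=\nu$, and let $\cu K$ consist of the measurable maps $\bo f\colon P\to\omega$, with $\l\theta(\vec{\bo f})\rr=\{t:\cu M_t\models\theta(\vec{\bo f}(t))\}$ for first order $\theta$ (a Borel set, since satisfaction of a fixed formula is Borel in the pair consisting of a structure and a tuple). Because $\nu$ is atomless and the measure algebra of a standard probability space is separable, and because every event $\sa C\in\cu E$ is realized as $\l\bo a=\bo b\rr$ by simple sections, $\cu N$ is a separable randomization; since each $\cu M_t\models\varphi$ we get $\mu(\l\varphi\rr)=1$, so $\cu N$ is a randomization of $\varphi$. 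Passing to the completion $\cu P$, which is a complete separable randomization of $\varphi$ with $\mu^{\cu N}=\mu^{\cu P}$, lets me invoke Fact~\ref{f-infinitary-model}.

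The key step is to identify the infinitary event functions with pointwise evaluation. I claim that $\psi\mapsto\{t:\cu M_t\models\psi(\vec{\bo f}(t))\}$ satisfies clauses (i)--(vi) of Fact~\ref{f-infinitary-model}; by the uniqueness asserted there, $\l\psi\rr^{\cu P}$ must equal this pointwise event. Clauses (i)--(v) are immediate from pointwise Tarski semantics together with the fact that $\sup_k$ in $\cu E$ is the countable union computed pointwise. For the existential clause (vi), given $\vec{\bo f}$ I would let $\bo g(t)$ be the least $k\in\omega$ with $\cu M_t\models\theta(k,\vec{\bo f}(t))$ whenever such $k$ exists; this $\bo g$ is measurable, and $\l\theta(\bo g,\vec{\bo f})\rr$ already equals $\{t:\cu M_t\models(\exists u)\theta(u,\vec{\bo f}(t))\}$, so the supremum over $\cu K$ is attained and agrees with the pointwise existential event.

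Finally, let $\delta$ be the Scott sentence of a countable structure $\cu M$. Then $\cu M_t\models\delta$ iff $\cu M_t\cong\cu M$, and since $t\mapsto\cu M_t$ is injective up to isomorphism the event $\{t:\cu M_t\models\delta\}$ is empty or a singleton, hence $\nu$-null. Thus $\mu^{\cu N}(\l\delta\rr)=\mu^{\cu P}(\l\delta\rr)=\nu(\{t:\cu M_t\models\delta\})=0$ for every Scott sentence $\delta$, and Fact~\ref{f-9.5} completes the contrapositive. I expect the main obstacle to lie in the second and third steps: rigorously checking that the randomization over the perfect family is genuinely separable and satisfies the axioms of $P^R$ (fullness together with atomlessness), and verifying the existential clause (vi) through the measurable least-witness selection. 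The concluding measure computation is then immediate.
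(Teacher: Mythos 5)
This Fact is not proved in the paper at all: it is quoted as background from [K2] (Theorem 10.1), so there is no in-paper argument to compare against. Your proposal is, in essence, the argument of the cited source: from a failure of scatteredness extract a perfect set of pairwise non-isomorphic countable models of $\varphi$, equip it with an atomless Borel probability measure, form the induced randomization whose event functions are computed pointwise, identify the infinitary event functions $\l\psi(\cdot)\rr^{\cu P}$ with pointwise satisfaction via the uniqueness clause of Fact \ref{f-infinitary-model} (with measurable least-witness selection handling the existential clause (vi)), observe that each Scott sentence then holds at no more than one point of the perfect set and hence determines a null event, and conclude by Fact \ref{f-9.5}. The main steps are sound: the measure algebra of $(P,\nu)$ and the space of measurable maps into $\omega$ (mod null) are separable and complete, fullness holds because every model of $\varphi$ has at least two elements, and the atomlessness of $\nu$ kills every Scott sentence.

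One justification, however, is false and should be repaired. You claim that ``there are only countably many finite structures up to isomorphism''; this fails when the countable signature $L$ is infinite. For example, with unary predicates $P_0,P_1,\dots$, the two-element structures already realize continuum many isomorphism types, so a sentence can have a perfect set of pairwise non-isomorphic \emph{finite} models, and your shrinking step cannot in general produce a perfect set of infinite models. The repair is routine: either invoke the standard coding convention under which ``a perfect set of countable models'' lives in the Polish space of $L$-structures with universe $\omega$ (so every model in it is already infinite and no shrinking is needed), or else allow universes $M_t\subseteq\omega$ and take random elements to be measurable selections $\bo f$ with $\bo f(t)\in M_t$; the least-witness selection, the fullness argument (which only needs that $\varphi$ implies the existence of two distinct elements), and the null-event computation for Scott sentences all go through verbatim. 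With that one fix your proof is correct, and it matches the strategy of the original proof in [K2].
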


\begin{fact}  \label{f-Martin}  (Theorem 10.3 in [K2]).
Assume that Lebesgue measure is $\aleph_1$-additive (e.g. assume that MA$(\aleph_1)$ holds).  Then every scattered sentence  has few separable randomizations.
\end{fact}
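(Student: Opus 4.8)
The plan is to use Fact~\ref{f-9.5} to convert the statement into a purely measure-theoretic assertion about a single countable randomization, and then to exploit $\aleph_1$-additivity to rule out the bad case. So let $\varphi$ be scattered, let $\cu N$ be any countable randomization of $\varphi$ with completion $\cu P=(\cu K,\cu E)$, and suppose toward a contradiction that $\mu^{\cu N}(\l\delta\rr)=0$ for \emph{every} Scott sentence $\delta$; by Fact~\ref{f-9.5} it suffices to derive a contradiction from this. The one place where scatteredness enters is the following cardinality bound: by Morley's analysis [M] a scattered sentence has at most $\aleph_1$ pairwise non-isomorphic countable models, so the set $I$ of Scott sentences of countable models of $\varphi$ satisfies $|I|\le\aleph_1$, and for each $\delta\in I$ I fix a countable model $\cu M_\delta$ of $\varphi$ with Scott sentence $\delta$.

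Next I would pass to an honest ``fiberwise'' picture of the countable randomization. Since $\cu P$ is separable and atomless, $\cu E$ is the Lebesgue measure algebra, and since $\cu N$ is countable we may list its random elements as $\bo f_0,\bo f_1,\dots$ and realize every event $\l\theta(\vec{\bo f})\rr$ (for first-order $\theta$) by an honest Borel subset of $[0,1)$. Discarding a single null set, for each remaining $t$ the values $\{\bo f_i(t)\}$ carry an $L$-structure $\cu M_t$ whose atomic diagram is read off pointwise from these events, and $\cu M_t$ is countable. Because the recursion clauses (iii)--(vi) of Fact~\ref{f-infinitary-model} mirror the recursive definition of satisfaction, and because the quantifier clause~(vi) takes a supremum over the countably many $\bo f_i$ --- which for the countable fiber $\cu M_t$ is exactly an honest existential quantifier over $\cu M_t$ --- an induction on $L_{\omega_1\omega}$-formulas gives, up to a null set, $\l\psi(\vec{\bo f})\rr=\{t\mid\cu M_t\models\psi(\vec{\bo f}(t))\}$ for all $\psi$. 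In particular $\cu M_t\models\varphi$ for almost every $t$, and $\l\delta\rr=\{t\mid\cu M_t\cong\cu M_\delta\}$ up to null for each $\delta\in I$.

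The contradiction then falls out of $\aleph_1$-additivity. Fix Borel representatives $\sa D_\delta$ of $\l\delta\rr$; by hypothesis each $\sa D_\delta$ is Lebesgue-null. For almost every $t$ the fiber $\cu M_t$ is a countable model of $\varphi$, hence is isomorphic to $\cu M_\delta$ for its own Scott sentence $\delta\in I$, so $t\in\sa D_\delta$. Thus the honest union $U=\bigcup_{\delta\in I}\sa D_\delta$ covers a set of measure one. But $U$ is a union of at most $\aleph_1$ null sets, so $\aleph_1$-additivity of Lebesgue measure forces $\mu(U)=0$, a contradiction.

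Finally, a word on where the difficulty and the hypothesis really lie. The only spot where $\aleph_1$-additivity is genuinely needed is in collapsing the \emph{honest} $\aleph_1$-union $U$ to measure zero: by the countable chain condition the supremum $\bigsqcup_{\delta\in I}\l\delta\rr$ computed in the algebra $\cu E$ already collapses to a countable subfamily and so has measure zero outright, but this says nothing about the honest union $U$, whose outer measure can be one unless the measure is $\aleph_1$-additive --- and this is precisely the gap the later ZFC argument must close by other means. The main technical obstacle is therefore the fiberwise representation of the second paragraph: showing that for a \emph{countable} randomization the infinitary events $\l\psi\rr$ are, modulo a null set, computed pointwise in genuine countable models $\cu M_t$. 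Verifying that the supremum in clause~(vi) is attained within the listed $\bo f_i$, so that it really is quantification over the countable structure $\cu M_t$, is the crux, and is exactly where the reduction to \emph{countable} randomizations supplied by Fact~\ref{f-9.5} is used.
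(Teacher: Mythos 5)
This statement is not proved in the paper at all: it is imported as Fact~\ref{f-Martin}, with a citation to Theorem 10.3 of [K2], and the paper's contribution is precisely to eliminate its hypothesis by Shoenfield absoluteness, using the Fact as a black box inside a forcing extension where MA$(\aleph_1)$ holds. So the relevant comparison is with Keisler's argument in [K2], and your proposal is essentially that argument: reduce via Fact~\ref{f-9.5} to a single countable randomization $\cu N$ of $\varphi$; invoke Morley's theorem [M] that a scattered sentence has at most $\aleph_1$ countable models up to isomorphism (correctly identified as the only place scatteredness enters); represent $\cu N$ pointwise by countable fiber structures $\cu M_t$ so that the infinitary events are computed fiberwise modulo null sets; and then apply $\aleph_1$-additivity to the honest union of the $\aleph_1$ many null Borel representatives $\sa D_\delta$. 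Your closing observation --- that the ccc supremum $\sup_{\delta\in I}\l\delta\rr$ in the measure algebra is useless because it collapses to a countable subfamily, and that the whole point is the honest union of representatives --- is exactly the right diagnosis of where the set-theoretic hypothesis does its work.

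Two details in the fiberwise step should be stated more carefully, though both are fixable with tools already quoted in the paper. First, the supremum in clause (vi) of Fact~\ref{f-infinitary-model} ranges over all of $\cu K$ in the \emph{completion}, and it is not literally attained among the listed $\bo f_i$; rather it \emph{equals} the countable supremum $\sup_i\l\theta(\bo f_i,\vec{\bo f})\rr$ because the $\bo f_i$ are dense in the $\BK$-sort and $\bo g\mapsto\l\theta(\bo g,\vec{\bo f})\rr$ is Lipschitz with bound one (the ``moreover'' clause of Fact~\ref{f-infinitary-model}); if $\sa A_k\le\sa S$ for all $k$ and $\sa A_k\to\sa A$, then $\sa A\le\sa S$, which gives the needed equality of suprema. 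Second, the null exceptional set $N_\delta$ in the a.e.\ identity $\l\delta\rr=\{t\mid\cu M_t\models\delta\}$ depends on $\delta$, so what your argument literally shows is that $\bigcup_{\delta\in I}(\sa D_\delta\cup N_\delta)$ contains an almost-sure event; since $\bigcup_{\delta\in I}N_\delta$ is again an $\aleph_1$-union of null sets, $\aleph_1$-additivity absorbs it and the contradiction goes through, but this second use of the hypothesis deserves mention. With those two points supplied, your proof is correct.
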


Question 11.4 in [K2] asks whether or not the conclusion of Fact \ref{f-Martin} can be proved in ZFC.

\section{The Main Result}

We will prove the following theorem, which answers Question 11.4 in [K2] affirmatively.

\begin{thm}  \label{t-main}
Every scattered sentence has few separable randomizations.
\end{thm}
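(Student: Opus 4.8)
The plan is to deduce the theorem from Fact \ref{f-Martin} by invoking the Shoenfield absoluteness theorem to eliminate the hypothesis that $\mathrm{MA}(\aleph_1)$ holds. Fix a scattered $L_{\omega_1\omega}$-sentence $\varphi$; the goal is to show that $\varphi$ has few separable randomizations. First I would pass to a forcing extension $V[G]$ of the set-theoretic universe $V$ in which $\mathrm{MA}(\aleph_1)$ holds, obtained for instance by the standard finite-support ccc iteration of length $\omega_2$. The strategy is then: (a) the statement ``$\varphi$ is scattered'' is absolute between $V$ and $V[G]$, so $\varphi$ remains scattered in $V[G]$; (b) since $\mathrm{MA}(\aleph_1)$ holds in $V[G]$, Fact \ref{f-Martin} gives that $\varphi$ has few separable randomizations in $V[G]$; and (c) the statement ``$\varphi$ has few separable randomizations'' is absolute between $V$ and $V[G]$, so it descends back to $V$.

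The key to (a) and (c) is to check that both statements, with $\varphi$ as a real parameter, are $\Pi^1_2$; Shoenfield's theorem then yields their absoluteness (in both directions) between $V$ and $V[G]$, since $V\subseteq V[G]$ are models of $\mathrm{ZF}$ with the same ordinals and $\varphi\in V$. For the property of having few separable randomizations I would use Fact \ref{f-9.5}, which lets me replace the class of all complete separable randomizations by the class of \emph{countable} randomizations $\cu N$ of $\varphi$, each of which is coded by a real. Thus ``$\varphi$ has few separable randomizations'' becomes
$$ \forall\,\cu N\;\bigl[\,\cu N\text{ codes a countable randomization of }\varphi\ \to\ \exists\,\cu M\;\mu^{\cu N}(\l\delta_{\cu M}\rr)>0\,\bigr], $$
where $\cu M$ ranges over codes for countable $L$-structures and $\delta_{\cu M}$ is the canonical Scott sentence of $\cu M$ (so that $\delta_{\cu M}$ ranges over Scott sentences as $\cu M$ ranges over countable structures). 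Once I verify that the displayed matrix is Borel in the relevant codes, the inner existential quantifier makes ``$\exists\,\cu M\,(\cdots)$'' a $\Sigma^1_1$ condition on $\cu N$, and the outer universal quantifier over $\cu N$ exhibits the whole statement as $\Pi^1_2(\varphi)$. For (a) I would similarly unwind the definition: $\varphi$ fails to be scattered exactly when there is a countable fragment $L_A$ and a continuous injection $g\colon 2^\omega\to\{\text{countable }L\text{-structures}\}$ whose values are pairwise non-$L_A$-equivalent models of $\varphi$. Coding $A$ and $g$ by reals, the conditions ``$g(\sigma)\models\varphi$ for all $\sigma$'' and ``$g(\sigma)$ and $g(\tau)$ are not $L_A$-equivalent for all $\sigma\ne\tau$'' are each $\Pi^1_1$ (the L\'opez--Escobar theorem makes satisfaction of the fixed sentence $\varphi$ Borel in the structure, and non-$L_A$-equivalence is Borel since $A$ is countable), so being non-scattered is $\Sigma^1_2(\varphi)$ and being scattered is $\Pi^1_2(\varphi)$, as required.

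The step I expect to be the main obstacle is verifying the definability claims underlying the Borelness of the matrices above, that is, showing that the value $\mu^{\cu N}(\l\psi\rr)$ is computed by a Borel function of a code for the countable randomization $\cu N$ and a code for the $L_{\omega_1\omega}$-formula $\psi$ (in particular for $\psi=\varphi$ and for $\psi=\delta_{\cu M}$). Here I would lean on Fact \ref{f-infinitary-model}: the infinitary event function $\l\cdot\rr^{\cu P}$ on the completion $\cu P$ of $\cu N$ is the unique family obeying the recursion clauses (ii)--(vi), and it is Lipschitz with bound one. The Lipschitz bound, together with the density of the elements of $\cu N$ in the random-element sort of $\cu P$, lets me replace each supremum over $\cu K$ in clause (vi) by a countable supremum over the elements of $\cu N$, so that the entire computation is a recursion over the countable well-founded syntax tree of $\psi$ in which each step is a negation, a countable supremum, or a measurement of events in the separable completed event algebra. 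Each such operation is continuous, hence Borel, in the codes, and I would assemble them into a single Borel (indeed $\Delta^1_1$, uniformly in the codes) evaluation map; the conditions ``$\mu^{\cu N}(\l\varphi\rr)=1$'' and ``$\mu^{\cu N}(\l\delta_{\cu M}\rr)>0$'' are then Borel, completing the verification that both target properties are $\Pi^1_2$ and finishing the argument via (a)--(c).
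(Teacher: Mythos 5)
Your overall skeleton coincides with the paper's: pass to a forcing extension in which $\mathrm{MA}(\aleph_1)$ holds, push ``$\varphi$ is scattered'' up via Shoenfield (it is $\Pi^1_2$), apply Fact \ref{f-Martin} there, and pull the conclusion back down, using Fact \ref{f-9.5} to reduce everything to countable coded randomizations. (The paper grounds the ``force over $V$'' manoeuvre via ZFC$_0$, reflection, and countable transitive models; that difference is routine.) The genuine gap is in step (c), at exactly the point you flag as the main obstacle: the claim that $(\cu N,\cu M)\mapsto \mu^{\cu N}(\l\delta_{\cu M}\rr)$ is Borel (or uniformly $\Delta^1_1$) in the codes. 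For the \emph{fixed} sentence $\varphi$ your recursion argument is fine, because the syntax tree is one fixed countable well-founded tree, so ``$\cu N$ is a countable randomization of $\varphi$'' is indeed Borel in a code for $\cu N$. But $\delta_{\cu M}$ varies with $\cu M$, and the rank of its syntax tree is essentially the Scott rank of $\cu M$, which is unbounded below $\omega_1$; no single Borel function can carry out a recursion of unbounded countable depth. In fact your matrix is provably not Borel: take $\cu N$ to be the natural (recursive-in-$\cu M'$) code of the Borel randomization $\ti{\cu M'}$ of a countable structure $\cu M'$. In $\ti{\cu M'}$ the event of an $L_{\omega_1\omega}$-sentence has measure $1$ or $0$ according as $\cu M'$ satisfies it or not, so by Scott's theorem and Fact \ref{f-9.4} your matrix holds at $(\ti{\cu M'},\cu M)$ exactly when $\cu M'\cong\cu M$. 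Borelness of the matrix would thus make isomorphism of countable $L$-structures Borel, contradicting its $\Sigma^1_1$-completeness whenever $L$ is rich enough (e.g.\ contains a binary relation symbol), which is the only case of interest.

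Nor can you rescue step (c) by settling for a $\Sigma^1_1$ matrix (which would suffice, as it sits under your existential quantifier over $\cu M$): any $\Sigma^1_1$ predicate expressing ``$\delta$ is the canonical Scott sentence of $\cu M$'' would, by $\Sigma^1_1$-boundedness, bound the ranks of the well-founded codes $\delta$ by a fixed countable ordinal, whereas Scott sentences realize cofinally many ranks below $\omega_1$. The natural write-up (existentially quantify a Scott analysis of $\cu M$ and an evaluation assignment along the resulting syntax tree) requires a $\Pi^1_1$ well-foundedness conjunct, so the inner statement becomes $\Sigma^1_2$ and your full sentence becomes $\Pi^1_3$, beyond Shoenfield's reach. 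This is precisely the difficulty that the paper's Lemma \ref{L2} is designed to overcome, and it is overcome by eliminating infinitary syntax from the statement altogether: by Fact \ref{f-9.4}, ``some Scott sentence gets positive measure in $\cu N$'' is equivalent to ``there exist a countable structure $\cu M$ and an event $\sa C$ with $\mu(\sa C)>0$ such that $\cu N|\sa C\cong\ti{\cu M}$,'' and by Fact \ref{f-isomorphism} such an isomorphism is witnessed by countable sequences from $\cu M^{\cu A}$ and $\cu K$ subject only to Cauchy conditions and limit conditions on measures of \emph{first-order} events (statement (S'') in the paper). That reformulation is what makes the statement genuinely $\Sigma^1_1(\cu N)$, hence the whole property $\Pi^1_2(\varphi)$. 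Your steps (a) and (b) are sound, but without something playing the role of Lemma \ref{L2}, step (c) does not go through.
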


Fact \ref{f-10.1} and Theorem \ref{t-main} give us the following two corollaries.

\begin{cor} \label{c-scattered-char}
A sentence of $L_{\omega_1\omega}$ is scattered if and only if it has few separable randomizations.
\end{cor}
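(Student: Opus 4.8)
The plan is to remove the hypothesis $\mathrm{MA}(\aleph_1)$ from Fact \ref{f-Martin} by a Shoenfield absoluteness argument, exactly as announced in the introduction. Fix a real parameter coding the $L_{\omega_1\omega}$-sentence $\varphi$. The heart of the matter is to show that both relevant properties of $\varphi$ ---``$\varphi$ is scattered'' and ``$\varphi$ has few separable randomizations''--- are $\Pi^1_2$ in this parameter, and hence, by the Shoenfield absoluteness theorem, have the same truth value in $V$ and in every generic extension $V[G]$. Granting this, the theorem is immediate: by the usual Solovay--Tennenbaum construction, force with a finite-support ccc iteration $\mathbb{P}$ so that $V[G]\models\mathrm{MA}(\aleph_1)$, noting that $\mathbb{P}$ preserves cardinals and that the code for $\varphi$ lies in both $V$ and $V[G]$. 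If $\varphi$ is scattered in $V$, then by absoluteness it is scattered in $V[G]$; since $V[G]\models\mathrm{MA}(\aleph_1)$, Fact \ref{f-Martin} gives that $\varphi$ has few separable randomizations in $V[G]$; and by absoluteness once more, $\varphi$ has few separable randomizations in $V$.

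Thus everything reduces to two complexity estimates, and I expect these ---rather than the forcing--- to be the real work. For the first, I would recast ``not scattered'' so as to avoid a naive quantifier count (which lands too high). For a countable fragment $L_A$, the theory map $M\mapsto\Th_{L_A}(M)$ is Borel on the Polish space of countable models of $\varphi$, so the associated relation of $L_A$-equivalence is smooth; consequently $\varphi$ has a perfect set of pairwise non-$L_A$-equivalent models if and only if the analytic set of $L_A$-theories realized in countable models of $\varphi$ is uncountable. Hence ``$\varphi$ is not scattered'' becomes ``there is a countable fragment $L_A$ for which this analytic set is uncountable.'' Now invoking the classical fact that countability of an analytic set is a $\Pi^1_1$ property of its code (every element of a countable analytic set is hyperarithmetic in the code), uncountability is $\Sigma^1_1$, and the single existential real quantifier over fragments puts ``not scattered'' at the $\Sigma^1_2$ level. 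Therefore ``$\varphi$ is scattered'' is $\Pi^1_2$ and absolute.

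For the second estimate I would use Fact \ref{f-9.5} to replace ``$\varphi$ has few separable randomizations'' by ``every countable randomization $\cu N$ of $\varphi$ carries a Scott sentence of positive measure.'' Its negation asserts the existence of a real coding a countable randomization $\cu N$ of $\varphi$ such that $\mu^{\cu N}(\l\delta\rr)=0$ for every Scott sentence $\delta$. The clause ``$\delta$ is a Scott sentence'' is $\Sigma^1_1$ (it asserts that $\delta$ is the Scott sentence of some countable structure), so the universally quantified conditional over $\delta$ is $\Pi^1_1$, and the whole negation is $\exists\cu N[\Pi^1_1]$, i.e.\ $\Sigma^1_2$ ---provided that the numerical value $\mu^{\cu N}(\l\delta\rr)$ is read off from the code for $\cu N$ in a Borel (indeed hyperarithmetic) fashion. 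This is the crux, and I expect it to be the main obstacle. What must be checked is that the infinitary event map $\l\cdot\rr^{\cu P}$ of Fact \ref{f-infinitary-model} ---defined by recursion on $L_{\omega_1\omega}$-complexity through the suprema in clauses (v) and (vi)--- is Borel-definable from a code for the countable, hence separable, randomization. Since $\cu N$ is countable, each such supremum may be taken over a fixed countable dense set, and the Lipschitz bound of Fact \ref{f-infinitary-model} controls the passage to the metric completion; together these should show that ``$\mu^{\cu N}(\l\psi\rr)\ge r$'' is Borel in $(\cu N,\psi,r)$, uniformly in the complexity of $\psi$. Granting this, ``$\varphi$ has few separable randomizations'' is $\Pi^1_2$ and absolute.

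It then remains only to confirm the auxiliary facts that ``$\cu N\models P^R$'' and ``$\mu^{\cu N}(\l\varphi\rr)=1$'' do not raise the complexity of ``$\cu N$ is a countable randomization of $\varphi$'' (the former is arithmetic, the latter Borel by the same analysis), and that $V[G]$ has the same ordinals as $V$ (automatic for forcing), so that Shoenfield applies with the fixed parameter $\varphi$. I anticipate no difficulty there; the genuine content is the Borel-definability of the infinitary measure map in the second estimate, with the reformulation of scatteredness via smooth equivalence relations a close second.
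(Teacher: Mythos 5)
Your proposal implements the same high-level idea the paper announces (Shoenfield absoluteness plus forcing $\mathrm{MA}(\aleph_1)$), and it is essentially workable, but its decomposition is genuinely different from the paper's, and the difference lies exactly at the point you flag as the crux. The paper never shows that ``$\varphi$ has few separable randomizations'' is $\Pi^1_2$. Instead it argues pointwise over randomizations: fixing one countable randomization $\cu N$ of $\varphi$ in $V$, it proves (Lemma \ref{L1}) by a direct induction on formulas --- a semantic $V$-versus-$V[G]$ comparison, with no definability computation at all --- that $\mu^{\cu N}(\l\varphi\rr)$ is the same in $V$ and $V[G]$; and it proves (Lemma \ref{L2}) that the statement (S), ``some Scott sentence has positive measure in $\cu N$,'' is $\Sigma^1_1$ with parameter $\cu N$. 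Crucially, Lemma \ref{L2} obtains the $\Sigma^1_1$ bound by \emph{eliminating} Scott sentences altogether: via Fact \ref{f-9.4}, (S) is equivalent to ``there exist a countable $\cu M$ and an event $\sa C$ of positive measure with $\cu N|\sa C\cong\ti{\cu M}$,'' and the isomorphism is then witnessed by Cauchy sequences of elements of the countable pre-structures. Absoluteness is invoked only for $\Pi^1_2$ scatteredness (upward) and for the $\Sigma^1_1$ statement (S) (downward, where Mostowski suffices); a final reflection/L\"owenheim--Skolem/Mostowski-collapse step (the proof of Theorem \ref{t-main}) is what makes ``force over $V$'' rigorous. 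Also note that the corollary needs the converse direction as well, which is the quoted Fact \ref{f-10.1}; your proposal only addresses the substantive direction and should cite that fact.

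Your route instead keeps the quantifier over Scott sentences and computes the complexity of ``few'' head-on, which forces you through precisely what the paper's detour is engineered to avoid. Two points need repair. First, ``$\delta$ is a Scott sentence'' is $\Sigma^1_1$ only under the reading you parenthetically adopt --- $\delta$ is \emph{the} (canonical) Scott sentence of some countable structure, coded together with its Scott analysis; the categoricity reading ``all countable models of $\delta$ are isomorphic'' is $\Pi^1_2$ and would wreck your count. Second, your claim that ``$\mu^{\cu N}(\l\delta\rr)\ge r$'' is Borel in $(\cu N,\delta,r)$ \emph{uniformly} in $\delta$ is too strong as stated: the recursion of Fact \ref{f-infinitary-model} has unbounded countable depth and the set of sentence-codes is properly $\Pi^1_1$, so the global relation cannot be Borel. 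What is true, and suffices, is the witness formulation: an assignment of (codes for) events to the nodes of the syntax tree of $\delta$ satisfying clauses (iii)--(vi), with the suprema taken over the countable dense sets and the Lipschitz bound controlling the completion, exists and is unique; existence makes the relation $\Sigma^1_1$, uniqueness makes it $\Pi^1_1$, and the $\Pi^1_1$ form keeps your negation of ``few'' at $\Sigma^1_2$. With these repairs your argument goes through, and it yields something the paper does not state --- that ``few separable randomizations'' is itself a $\Pi^1_2$ property of $\varphi$ --- at the cost of more delicate effective descriptive set theory; the paper's combination of isomorphism-witnesses (Lemma \ref{L2}) and the direct induction (Lemma \ref{L1}) buys freedom from any definability analysis of the infinitary event map. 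Your $\Pi^1_2$ bound for scatteredness is the standard argument (via the effective perfect set theorem) and is fine; the paper simply asserts that fact.
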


\begin{cor}
For each $L_{\omega_1\omega}$-sentence $\varphi$, the following are equivalent.

\begin{itemize}
\item[(i)] The absolute Vaught conjecture for $\varphi$ holds.
\item[(ii)]  If $\varphi$ has few separable randomizations, then $\varphi$ has countably many countable models.
\end{itemize}
\end{cor}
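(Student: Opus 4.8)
The plan is to derive this equivalence directly from Corollary~\ref{c-scattered-char}, which asserts that an $L_{\omega_1\omega}$-sentence is scattered if and only if it has few separable randomizations. That characterization is what does all the work; once it is in hand, the corollary reduces to a purely logical manipulation, so my proof will simply cite it and carry out the substitution.

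First I would unwind the two statements into explicit conditionals. By the definition given in the introduction, statement (i)---the absolute Vaught conjecture for $\varphi$---is the implication ``if $\varphi$ is scattered, then $\varphi$ has countably many countable models.'' Statement (ii) is the implication ``if $\varphi$ has few separable randomizations, then $\varphi$ has countably many countable models.'' These two conditionals share the same consequent, namely that $\varphi$ has countably many countable models, so they can differ only in their hypotheses. I would then invoke Corollary~\ref{c-scattered-char} to identify those hypotheses: being scattered and having few separable randomizations are equivalent properties of $\varphi$. Substituting one hypothesis for the other transforms (i) into (ii) and back again, whence (i) and (ii) are equivalent, completing the argument.

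I expect no genuine obstacle here, since all the substance resides in Corollary~\ref{c-scattered-char}, which itself combines Fact~\ref{f-10.1} (few separable randomizations implies scattered) with Theorem~\ref{t-main} (scattered implies few separable randomizations). The one point meriting a moment's care is to confirm that ``the absolute Vaught conjecture for $\varphi$'' is precisely the conditional with scatteredness as its hypothesis---which is exactly how it is defined above---so that the equivalence of hypotheses supplied by Corollary~\ref{c-scattered-char} transfers cleanly to the two implications.
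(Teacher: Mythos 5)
Your proposal is correct and matches the paper's own (implicit) argument exactly: the paper states that Fact~\ref{f-10.1} and Theorem~\ref{t-main} yield both corollaries, i.e., the equivalence of scatteredness with having few separable randomizations (Corollary~\ref{c-scattered-char}) is substituted into the hypothesis of the absolute Vaught conjecture, precisely as you do. Your one point of care---checking that the absolute Vaught conjecture for $\varphi$ is by definition the conditional with scatteredness as hypothesis---is the same observation the paper makes in its introduction.
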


Note that each countable pre-structure $\cu N=(\cu K,\cu B)$ in the signature $L^R$ can be coded in a natural way by a first order structure with universe $\BN$ and a countable
signature indexed by $\BN$.  In particular, the function $\mu\colon\cu B\to[0,1]$ can be coded by the set of $(e,m,n)\in\BN^3$ such that $e$ codes an event $\sa E$ and
$m/n\le\mu(\sa E).$

Let $\cu A$ be the set of subsets of $[0,1)$ that are finite unions of intervals with rational endpoints.
Given a countable model $\cu M$ of $(\exists x)(\exists y)x\ne y$ with countable signature $L$, let $\cu M^{\cu A}$ be the set of functions $f\colon[0,1)\to\cu M$ with finite range such that
for each $a\in\cu M$, $f^{-1}(a)\in\cu A$.  Let $\ti{\cu M}$ be the completion of $(\cu M^{\cu A},\cu A)$.
$\ti{\cu M}$ is isomorphic to the Borel randomization  $(\cu M^{\cu L},\cu L)$ of $\cu M$.
$\cu A$, $\cu M$, and $\cu M^{\cu A}$ are countable and can be coded in the natural way by subsets of $\BN$.

\begin{lemma} \label{L2} Let $\cu N=(\cu K,\cu B)$ be a countable randomization with a coding.
Then the statement (S) below is equivalent (in ZFC) to a $\Sigma^1_1$ formula with parameter $\cu N$.
\begin{itemize}
\item[(S)] There exists a Scott sentence $\delta$ such that $\mu^{\cu N}(\l\delta\rr)>0$.
\end{itemize}
\end{lemma}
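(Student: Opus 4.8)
The plan is to reformulate (S) so that no Scott sentence ever appears as a syntactic object. This is forced on us: deciding that a real codes a genuine (well-founded) $L_{\omega_1\omega}$-sentence is already $\Pi^1_1$, and ``$\delta$ is a Scott sentence'' is $\Pi^1_1$ as well (satisfiability is $\Sigma^1_1$, but categoricity of the countable models is $\Pi^1_1$), so an outer existential over such a matrix would only yield $\Sigma^1_2$. Instead I would show that (S) is equivalent to the assertion that \emph{some single isomorphism type of countable models receives positive measure}, witnessed directly inside $\cu N$. Let $\cu P=(\cu K^*,\cu E)$ be the completion of $\cu N$. I take the candidate $\Sigma^1_1$ formula $\Theta(\cu N)$ to assert: there exist an event $\sa A$ and a sequence $(\bo g_n)_{n<\omega}$ of random elements of $\cu P$, each presented as a $d_\BE$- respectively $d_\BK$-Cauchy sequence drawn from the countable $\cu B$ and $\cu K$, such that $\mu(\sa A)>0$, and (a) $\mu(\sa A\sqcap\l\theta(\bo g_{\bar n})\rr)\in\{0,\mu(\sa A)\}$ for every atomic $\theta$ and tuple of indices $\bar n$ (the atomic diagram of $(\bo g_n)$ is constant on $\sa A$), and (b) $\sa A\sqsubseteq\sup_n\l\bo h=\bo g_n\rr$ for every $\bo h\in\cu K$ (on $\sa A$ every random element is almost surely some $\bo g_n$).

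Each clause is arithmetic in the chosen real codes together with the coding of $\cu N$. The coding supplies $\mu(\l\theta(\vec{\bo f})\rr)$ and the Boolean and measure data for $\vec{\bo f}\in\cu K$ and events in $\cu B$, and by the Lipschitz bound in Fact~\ref{f-infinitary-model} these extend continuously to the Cauchy-coded elements of $\cu P$; infinite joins are evaluated through $\mu(\sup_n\sa C_n)=\lim_N\mu(\bigsqcup_{n\le N}\sa C_n)$, and containment $\sa A\sqsubseteq\sa B$ through $\mu(\sa A\sqcap\neg\sa B)=0$. Thus being Cauchy, ``$\mu(\sa A)>0$'', (a), and (b) are all arithmetic, and $\Theta(\cu N)$, being of the form $\exists(\text{real})[\text{arithmetic}]$, is $\Sigma^1_1$ with parameter $\cu N$. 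I note that clause (b) quantifies only over the countable $\cu K$; that this already forces the universe of the random model on $\sa A$ to be exactly $\{\bo g_n\}$ follows because $d_\BK(\bo h_k,\bo h)\to0$ gives $\bo h_k=\bo h$ almost everywhere along a subsequence, the metric on each model being discrete.

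For the implication $\Theta(\cu N)\Rightarrow$ (S): given such $\sa A$ and $(\bo g_n)$, clause (a) lets me read off a single countable $L$-structure $\cu M$, with domain the $\bo g_n$ modulo the constant equality atomic type and with atomic diagram the common value on $\sa A$, while clause (b) says that $\cu M$ exhausts the random model on $\sa A$. Hence on $\sa A$ the random structure is a copy of $\cu M$; letting $\delta$ be a Scott sentence of $\cu M$, an induction along the Scott analysis of $\cu M$ using items (ii)--(vi) of Fact~\ref{f-infinitary-model} gives $\sa A\sqsubseteq\l\delta\rr$, so $\mu^{\cu N}(\l\delta\rr)\ge\mu(\sa A)>0$ and (S) holds.

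The converse is the step I expect to be the main obstacle. Suppose $\mu^{\cu N}(\l\delta\rr)>0$ for a Scott sentence $\delta$ of a countable $\cu M$. By the uniqueness in Fact~\ref{f-infinitary-model}, $\l\delta\rr$ is the event that the random model is isomorphic to $\cu M$; set $\sa A=\l\delta\rr$, so $\mu(\sa A)>0$. I then pass to a representation of $\cu P$ as a genuine random structure $t\mapsto\cu M_t$ over a standard probability space, available since $\cu P$ is a complete separable randomization, and use two facts: that the values $\{\bo h(t):\bo h\in\cu K^*\}$ comprise the universe of $\cu M_t$ for almost every $t$ (fullness), and a measurable uniformization. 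Over $\sa A$ we have $\cu M_t\cong\cu M$, so the isomorphisms from the fixed countable $\cu M$ onto the random model form an analytic set with nonempty sections, whence a measurable selector $t\mapsto\iota_t$ exists; fixing an enumeration $a_0,a_1,\dots$ of $\cu M$ and putting $\bo g_n(t)=\iota_t(a_n)$ yields random elements satisfying (a) and (b) on $\sa A$. Checking that each such selection is genuinely a random element of $\cu P$ (measurability and countable range) and that fullness delivers clause (b) in its form restricted to the dense $\cu K$ is the delicate part; the remaining verifications are routine measure-theoretic bookkeeping justified by Fact~\ref{f-infinitary-model}.
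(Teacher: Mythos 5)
Your reformulation $\Theta(\cu N)$ (a positive-measure event $\sa A$ on which a sequence $(\bo g_n)$ of random elements has constant atomic diagram and exhausts the random model) is a genuinely different $\Sigma^1_1$ formula from the paper's, which instead codes an isomorphism between $\cu N|\sa C$ and the Borel randomization $\ti{\cu M}$ of a coded countable structure $\cu M$; your complexity analysis is fine, and the direction $\Theta\Rightarrow$ (S) can be made to work. But note a gap even there: the Scott-analysis induction needs the locality property $\l\bo h=\bo g_m\rr\sqcap\l\psi(\vec{\bo g},\bo g_m)\rr\sqsubseteq\l\psi(\vec{\bo g},\bo h)\rr$ for infinitary $\psi$, and this does \emph{not} follow from the Lipschitz bound of Fact \ref{f-infinitary-model}: a bound on the measure of a symmetric difference says nothing about \emph{where} that difference sits. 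Locality must be proved by a separate induction, using the first-order axioms of $P^R$ for the base case and clauses (iii)--(vi) for the inductive steps.

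The serious gap is your converse. You invoke a representation of the complete separable randomization $\cu P$ as a pointwise random structure $t\mapsto\cu M_t$ with fullness, the identification of $\l\delta\rr$ with the set of $t$ where $\cu M_t\cong\cu M$, and a measurable selection of isomorphisms. None of this is available from the facts quoted in the paper: the representation-with-fullness statement is itself a substantial theorem (essentially the main result of Andrews--Keisler, \emph{Separable models of randomizations}), the identification of $\l\delta\rr$ with a pointwise isomorphism event does not follow from the ``uniqueness'' clause of Fact \ref{f-infinitary-model} (uniqueness pins down the abstract event functions but does not connect them to pointwise satisfaction), and the uniformization is further descriptive-set-theoretic machinery on top. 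The paper shows all of this can be bypassed: given (S), set $\sa C=\l\delta\rr$; then $\cu N|\sa C$ is a complete separable randomization of $\delta$ with $\mu^{\cu N|\sa C}(\l\delta\rr)=1$, so Fact \ref{f-9.4}, applied to the one-block basic randomization $\ti{\cu M}$ for any countable $\cu M\models\delta$, yields an isomorphism $h\colon\ti{\cu M}\to\cu N|\sa C$ outright. That same isomorphism repairs your argument while keeping your formula: push the constant functions $\hat{a}_n\in\ti{\cu M}$ (where $a_0,a_1,\dots$ enumerates $M$) through $h$, lift the images to elements of $\cu P$ by gluing with a fixed element of $\cu K$ off $\sa C$, and these witness clauses (a) and (b) on $\sa A=\sa C$. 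I recommend replacing the representation/uniformization step by this use of Fact \ref{f-9.4}.
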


\begin{proof}  For each event $\sa C$ in the completion of $\cu N$ such that $\mu(\sa C)>0$, let $\mu|\sa C$ be the conditional measure such that
$$(\mu |\sa C)(\sa E)=\mu(\sa E\sqcap\sa C)/\mu(\sa C),$$
and let $\cu N |\sa C$ be the completion of the pre-structure obtained from $\cu N$ by replacing $\mu$ by $\mu|\sa C.$
We first show that (S) is equivalent to the following statement.

\begin{itemize}
\item[(S')]  There exists a countable model $\cu M$ of $(\exists x)(\exists y)x\ne y$ and an event $\sa C$ in the completion of $\cu N$ such that
$\mu(\sa C)>0$ and $\cu N|\sa C\cong\ti{\cu M}$.
\end{itemize}

Assume (S). Let $\delta$ be a Scott sentence $\delta$ such that $\mu^{\cu N}(\l\delta\rr)>0$.
 Let $\sa C=\l\delta\rr$, which is an event of positive measure  in the completion of $\cu N.$
Then $\mu^{\cu N|\sa C}(\l\delta\rr)=1$, so  $\cu N | \sa C$ is a separable randomization of $\delta$.  Let $\cu M$ be a countable  model of $\delta$.   By Fact \ref{f-9.4},
we have $\cu N | \sa C\cong \ti{\cu M}$, so (S') holds.

Now assume (S'). By Scott's theorem, $\cu M$ has a Scott sentence $\delta$. Then by Fact \ref{f-9.4}, $\mu^{\ti{\cu M}}(\l\delta\rr)=1$, so
$$1=\mu^{\cu N |\sa C}(\l\delta\rr)=\mu^{\cu N}(\sa C\sqcap\l\delta\rr)/\mu^{\cu N}(\sa C).$$
Hence
$$\mu^{\cu N}(\l\delta\rr)\ge\mu^{\cu N}(C\sqcap\l\delta\rr)=\mu^{\cu N}(\sa C)>0,$$
so (S) holds.

We now show that (S') is equivalent to the following statement.

\begin{itemize}
\item[(S'')] There exists a countable coded structure $\cu M$ with at least $2$ elements, a sequence $\sa B\colon\BN\to\cu B$, and double sequences
$\alpha\colon\BN\times\BN\to \cu M^{\cu A}$, $\beta\colon\BN\times\BN\to\cu K$ such that
\begin{itemize}
\item[(a)]   $\sa B$ is  Cauchy convergent in $d_{\BE}$, and $\lim_{n\to\infty} \mu(\sa B_n)>0$.
\item[(b)]  For each $m\in\BN$, $\< \alpha_{m,n}\mid n\in \BN\>$ and $\< \beta_{m,n}\mid n\in \BN\>$ are Cauchy convergent in $d_{\BK}$.
\item[(c)] For each $x\in\cu M^{\cu A}$, there exists $m_x\in\BN$ such that $\alpha_{m_x,n}=x$ for all $n\in\BN$, and for each
 $y\in\cu K$, there exists $m_y\in\BN$ such that $\beta_{m_y,n}=y$ for all $n\in\BN$.
\item[(d)] For each $L$-formula $\psi(v_1,\ldots,v_k)$,
$$ \lim_{n\to\infty}\mu^{\ti{\cu M}}(\l\psi(\alpha_{1,n},\ldots,\alpha_{k,n})\rr)=
\lim_{n\to\infty}\mu^{\cu N}(\l\psi(\beta_{1,n},\ldots,\beta_{k,n})\rr\sqcap \sa B_n)/\mu^{\cu N}(\sa B_n).$$
\end{itemize}
\end{itemize}

In (S''), $\cu N$ and $\cu M$ are coded structures, so (S'') is clearly $\Sigma^1_1$ with parameter $\cu N$.

The functions $\l\psi(\cdot)\rr$ are uniformly continuous in each model of $P^R$.
Whenever (a) and (b) hold, for each $m,n\in\BN$ the reduction maps send $\alpha_{m,n}$ to an element $\alpha''_{m,n}$ of $\ti{\cu M}$,
and $\beta_{m,n}$ to an element $\beta''_{m,n}$ of $\cu N|\sa C$, and the limits
 $\alpha'_m=\lim_{n\to\infty}\alpha''_{m,n}$ in $\ti{\cu M}$ and $\beta'_m=\lim_{n\to\infty}\beta''_{m,n}$ in $\cu N|\sa C$ exist.
Therefore, (a) and (b) imply that for each $L$-formula $\psi(v_1,\ldots,v_k)$,
\begin{equation} \label{eqM}
\mu^{\ti{\cu M}}(\l\psi(\alpha'_1,\ldots,\alpha'_k)\rr)=\lim_{n\to\infty}\mu^{\ti{\cu M}}
(\l\psi(\alpha_{1,n},\ldots,\alpha_{k,n})\rr)
\end{equation}
and
\begin{equation} \label{eqN}
 \mu^{\cu N|\sa C}(\l\psi(\beta'_1,\ldots,\beta'_k)\rr)=\lim_{n\to\infty}\mu^{\cu N}(\l\psi(\beta_{1,n},\ldots,\beta_{k,n})\rr\sqcap \sa B_n)/\mu^{\cu N}(\sa B_n).
\end{equation}

We next assume that (S') holds for some $\cu M$ and $\sa C$, and prove (S'').  We may take $\cu M$ to be a coded structure, and let $h$ be
an isomorphism  from  $\cu N | \sa C$ to  $\ti{\cu M}$.
We may choose mappings $\alpha'$ from $\BN$ into $\ti{\cu M}$ and $\beta'$ from $\BN$ into $\cu N|\sa C$
such that $ \range(\alpha'), \range(\beta')$ contain the images of $\cu M^\cu A$ and $\cu K$ under the reduction maps, and  $\alpha'_n=h(\beta'_n)$ for each $n\in\BN$.
Then for each $L$-formula $\psi(v_1,\ldots,v_k)$,
\begin{equation}  \label{eq2}
 \mu^{\ti{\cu M}}(\l\psi(\alpha'_1,\ldots,\alpha'_k)\rr)=
\mu^{\cu N|\sa C}(\l\psi(\beta'_1,\ldots,\beta'_k)\rr).
\end{equation}
One can choose a sequence $\sa B\colon\BN\to\cu B$, and double sequences
$\alpha\colon\BN\times\BN\to \cu M^{\cu A}$, $\beta\colon\BN\times\BN\to\cu K$ such that (c) holds,
the reduction of $\sa B_n$ converges to $\sa C$,
and for each $m\in \BN$ the reductions of $\alpha_{m,n}$ and $\beta_{m,n}$ converge to $\alpha'_m$ and $\beta'_m$ respectively.
Then conditions (a) and (b) hold, so (\ref{eqM}) and (\ref{eqN}) hold for each $L$-formula $\psi(v_1,\ldots,v_k)$,
By (\ref{eq2}), condition (d) holds, and hence (S'') holds.

Finally, we assume (S'') and prove (S').  
Let $\sa C=\lim_{n\to\infty} \sa B_n$ in the completion of $\cu B$.
Since (a) and (b) hold, (\ref{eqM}) and (\ref{eqN}) hold for each $L$-formula $\psi(v_1,\ldots,v_k)$.  Then by (d), (\ref{eq2}) holds for every $\psi$.
By (c), $\range(\alpha')\supseteq\cu M^{\cu A}$ and $\range(\beta')\supseteq\cu K$.
Therefore $\range(\alpha')$ is dense in the $\BK$-sort of $\ti{\cu M}$, and $\range(\beta')$ is dense in the $\BK$-sort of $\cu N|\sa C$.
Hence every element of $\ti{\cu M}$ of sort $\BK$ is equal to $\lim_{k\to\infty} \alpha'_{m_k}$ for some sequence $(m_0,m_1,\ldots)\in\BN^{\BN}$, and similarly for $\cu N|\sa C$ and $\beta'$.
Since $d_\BK(\bo a,\bo b)=\mu(\l \bo a\ne \bo b\rr)$ in any model of $P^R$, $\lim_{k\to\infty} \alpha'_{m_k}$ exists in $\ti{\cu M}$
if and only if $\lim_{k\to\infty} \beta'_{m_k}$ exists in $\cu N|\sa C$.
Whenever $\lim_{k\to\infty} \alpha'_{m_k}$ exists in $\ti{\cu M}$, let $h(\lim_{k\to\infty} \alpha'_{m_k})=\lim_{k\to\infty} \beta'_{m_k}$.
Then $h$ maps the $\BK$-sort of $\ti{\cu M}$ onto the $\BK$-sort of $\cu N|\sa C$.
Since (\ref{eq2}) holds and the functions $\l\psi(\cdot)\rr$ are uniformly continuous in $\ti{\cu M}$ and $\cu N|\sa C$,
$$\mu^{\ti{\cu M}}(\l \psi(\vec{\bo a})\rr)=\mu^{\cu N|\sa C}(\l \psi(h\vec{\bo a})\rr)$$
for each $L$-formula $\psi$ and tuple $\vec{\bo a}$ of sort $\BK$ in $\ti{\cu M}$.  Therefore by Fact \ref{f-isomorphism}, $h$ can be extended to an isomorphism from $\ti{\cu M}$ onto $\cu N | \sa C$.  This proves (S').
\end{proof}

By a \emph{transitive model} of a set of sentences $Z$ we mean a transitive set $V$ such that $(V,\in)\models Z$.
It is well known that there is a finite subset ZFC$_0$ of the set of axioms of ZFC such that
the Shoenfield absoluteness theorem holds for all transitive models of ZFC$_0$.  Assume hereafter that ZFC$_0$ is
a finite subset of ZFC with that property, and also that ZFC$_0$ implies every result stated in Section \ref{s-background},
Lemma \ref{L2} above, and every consequence of ZFC that is used in the proofs of Lemmas \ref{L1} and \ref{l-main} below.

\begin{lemma}  \label{L1}
Let $V, V[G]$ be transitive models of ZFC$_0$ such that the signature $L$ is in $V$, and $V\subseteq V[G]$.  Suppose that in $V$ it is true that
$\varphi$ is an $L_{\omega_1\omega}$-sentence and $\cu N=(\cu K,\cu B)$ is a countable randomization.   Then
in $V[G]$ it is also true that $\varphi$ is an $L_{\omega_1\omega}$-sentence and $\cu N=(\cu K,\cu B)$ is a countable randomization,
 and $\mu^{\cu N}(\l\varphi\rr)$ has the same value in $V$ as in $V[G]$.  Hence
$$ V\models \mbox{$\cu N$  is a countable randomization of } \varphi$$
if and only if
$$ V[G]\models \mbox{$\cu N$  is a countable randomization of } \varphi.$$
\end{lemma}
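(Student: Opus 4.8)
The plan is to verify three absoluteness claims in turn: that ``$\varphi$ is an $L_{\omega_1\omega}$-sentence'', that ``$\cu N$ is a countable randomization'', and --- the substantive one --- that the real number $\mu^{\cu N}(\l\varphi\rr)$ is computed to the same value in $V$ and in $V[G]$. Since ``$\cu N$ is a countable randomization of $\varphi$'' is by definition the conjunction of the first two claims with ``$\mu^{\cu N}(\l\varphi\rr)=1$'', the displayed equivalence at the end of the lemma drops out of these three.

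For the first claim, recall that an $L_{\omega_1\omega}$-formula over the countable signature $L\in V$ is coded by a countable, well-founded, labelled tree, and that the defining conditions (correct labels, finitely many free variables at each node, well-foundedness of the tree) are arithmetic or $\Pi^1_1$ in the code. For the second, a countable pre-structure $\cu N$ in $L^R$ is coded as indicated just before the lemma, and ``$\cu N\models P^R$'' asserts that each of the recursively listed continuous axioms of $P^R$ holds; since these are universal axioms and $\cu N$ is countable, each such assertion is a statement about the reals $\mu(\sa E)$, $d_\BK$, $d_\BE$ read off from the coding, quantified over the countable domain, hence again arithmetic or $\Pi^1_1$ in the code of $\cu N$. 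By Mostowski absoluteness (a special case of the Shoenfield theorem available in ZFC$_0$), all such statements are absolute between the transitive models $V$ and $V[G]$, giving the first two claims in both directions.

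For the measure, I would show that the entire recursion of Fact \ref{f-infinitary-model} computing $\l\varphi\rr^{\cu P}$ (where $\cu P$ is the completion of $\cu N$ and $\mu^{\cu N}(\l\varphi\rr)=\mu(\l\varphi\rr^{\cu P})$) can be carried out using only the countable absolute data in $\cu N$. Although $\l\psi(\vec{\bo f})\rr^{\cu P}$ lives in the completed event sort $\cu E$ --- whose points are limits and could in principle be realized differently in $V[G]$ --- I would represent each value by a Cauchy sequence drawn from the countable dense Boolean algebra $\cu B$, using $d_\BE(\sa A,\sa B)=\mu(\sa A\triangle\sa B)$ and the absolute measure $\mu$ on $\cu B$. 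The recursion then proceeds along the (absolutely) well-founded subformula tree of $\varphi$: at first-order nodes the value is the event function of $\cu N$ itself (clause (ii)); negation and binary disjunction are the Boolean operations $\neg,\sqcup$ applied to the approximating $\cu B$-sequences (clauses (iii),(iv)); and countable disjunctions and existentials are handled by clauses (v),(vi) as suprema, which in a measure algebra are $d_\BE$-limits of the increasing finite partial joins. Each step is an operation arithmetically definable from $\cu N$ and the codes of the immediate subformulas, so by absoluteness of well-founded recursion with an absolute step operation, the code produced for $\l\varphi\rr$, and hence its measure, is the same in $V$ and $V[G]$.

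The main obstacle is the existential clause (vi), where the supremum is taken over the $\BK$-sort of the completion $\cu P$, an uncountable and possibly model-dependent object; if the recursion genuinely referred to it, absoluteness would be in doubt. The fix is to observe that this supremum may be replaced by the supremum over the countable $\BK$-sort of $\cu N$, which is $d_\BK$-dense in that of $\cu P$: by the Lipschitz bound in the ``Moreover'' clause of Fact \ref{f-infinitary-model}, the map $\bo g\mapsto\l\theta(\bo g,\vec{\bo f})\rr^{\cu P}$ is $d_\BE$-continuous, so the events indexed by the countable dense subsort are $d_\BE$-dense in the full family; and in a measure algebra a supremum is unchanged upon passing to a $d_\BE$-dense subfamily, since if $\sa A_n\to\sa A$ in $d_\BE$ with each $\sa A_n\le\sa B$ then $\mu(\sa A\sqcap\neg\sa B)=\lim_n\mu(\sa A_n\sqcap\neg\sa B)=0$, whence $\sa A\le\sa B$. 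Thus every supremum arising in the recursion is effectively taken over countable absolute data, and the whole computation stays inside the coded, absolute structure $\cu N$, as required.
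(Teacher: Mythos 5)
Your proposal is correct and is essentially the paper's own argument: the paper likewise observes that the whole recursion of Fact \ref{f-infinitary-model} depends only on the countable data of $\cu N$, treating countable disjunctions as $d_\BE$-limits of increasing finite partial joins and handling the existential clause---the crux in both versions---by restricting the supremum to the countable dense image of $\cu K$ via the Lipschitz bound. The only difference is presentational: the paper runs an explicit induction on subformulas comparing $\llbracket\psi(\vec{\bo f})\rrbracket^{\cu P}$ (for the completion $\cu P$ computed in $V$) with $\llbracket\psi(\vec{\bo f})\rrbracket^{\cu Q}$ (for the completion $\cu Q$ computed in $V[G]$) on tuples from the reduction of $\cu K$, rather than packaging the computation as an absolute well-founded recursion on Cauchy-sequence codes from $\cu B$.
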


\begin{proof}   It is easily proved using induction on the complexity of formulas that
$$ V[G]\models \varphi \mbox{ is a an } L_{\omega_1\omega}\mbox{-sentence}.$$
Since the set of axioms of $P^R$ is recursive in $L$, the property of being a countable randomization is $\Sigma_1$, and hence
$$ V[G]\models \mbox{$\cu N$  is a countable randomization}.$$

Let $\cu P$ be the completion of $\cu N$ in $V$, and $\cu Q$ be the completion of $\cu N$ in $V[G]$.
In $V[G]$, $\cu P$ is a separable randomization that is not necessarily complete, and $\cu Q$ is the completion of
 $\cu N$ and also the completion of $\cu P$.  For each $L_{\omega_1\omega}$-formula $\psi(\cdot)$ in $V$, let $\l\psi(\cdot)\rr^{\cu P}$ be
 the function obtained by applying Fact \ref{f-infinitary-model} to $\cu P$  in $V$, and let $\l\psi(\cdot)\rr^{\cu Q}$ be
the function obtained by applying Fact \ref{f-infinitary-model} to $\cu Q$ in $V[G]$.
Using Conditions (i)--(vi) of Fact \ref{f-infinitary-model}, we show by induction on complexity that for every
$L_{\omega_1\omega}$-formula $\psi(\cdot)$ in $V$ and tuple $\vec{\bo f}$ in the reduction of $\cu K$,
$\l\psi(\vec{\bo f})\rr^{\cu P}=\l\psi(\vec{\bo f})\rr^{\cu Q}.$
The base step for first order formulas and the steps for negation and finite disjunction are easy.

Countable disjunction step:  Let $\psi=\bigvee_k\psi_k$, and suppose  $\vec{\bo f}$ is in the reduction of $\cu K$ and that
$\l\psi_k(\vec{\bo f})\rr^{\cu P}=\l\psi_k(\vec{\bo f})\rr^{\cu Q}$
holds for each $k\in\BN$.  Let $\psi'_k=\bigvee_{n\le k}\psi_n$.  Then $\l\psi'_k(\vec{\bo f})\rr^{\cu P}=\l\psi'_k(\vec{\bo f})\rr^{\cu Q}$
for each $k\in\BN$, and
$$\l\psi(\vec{\bo f})\rr^{\cu P}=\lim_{k\to\infty} \l\psi'_k(\vec{\bo f})\rr^{\cu P}=\lim_{k\to\infty} \l\psi'_k(\vec{\bo f})\rr^{\cu Q}=\l\psi(\vec{\bo f})\rr^{\cu Q}.$$

Existential quantifier step:  Let $\psi(\vec u)=(\exists v)\theta(\vec u,v)$ and suppose that
$\l\theta(\vec{\bo f},\bo g)\rr^{\cu P}=\l\theta(\vec{\bo f},\bo g)\rr^{\cu Q}$  for all $\vec{\bo f}, \bo g$ in the reduction of $\cu K$.
Since the reduction of $\cu K$ is dense in the sort $\BK$ parts of both $\cu P$ and $\cu Q$, and the functions $\l\theta(\cdot)\rr^{\cu P}$ and
$\l\theta(\cdot)\rr^{\cu Q}$ are both Lipschitz continuous with bound $1$ by Fact \ref{f-infinitary-model}, it follows that
$\l\psi(\vec{\bo f})\rr^{\cu P}=\l\psi(\vec{\bo f})\rr^{\cu Q}.$  This completes the induction.

Every event in $\cu P$ has the same measure in $V$ as in $V[G]$.
In particular, for the sentence $\varphi$, the measure of $\l\varphi\rr^{\cu P}$
is the same in $V$ as in $V[G]$.  We have
$$V\models\mu^{\cu N}(\l\varphi\rr)=\mu(\l\varphi\rr^{\cu P})$$
and
$$V[G]\models\mu^{\cu N}(\l\varphi\rr)=\mu(\l\varphi\rr^{\cu Q})=\mu(\l\varphi\rr^{\cu P}).$$
Therefore $\mu^{\cu N}(\l\varphi\rr)$ has the same value in $V$ as in $V[G]$.
\end{proof}

 Lemma \ref{L1} can also be proved by using the continuous analogue of the infinitary logic $L_{\omega_1\omega}$.  Lemma 5.18
 in the paper [EV] shows that for any metric structure $\cu P$ and continuous infinitary sentence $\Theta$ in $V$,
 the value of $\Theta$ in $\cu P$ computed in $V$ is the same as the value computed in $V[G]$.  Using Fact \ref{f-infinitary-model},
 one can find a continuous infinitary sentence $\Theta$ that has the same value as $\mu(\l\theta\rr^{\cu P})$ in any
 complete separable randomization $\cu P$, and then use Lemma 5.18 in [EV] to get Lemma \ref{L1}.

\begin{lemma} \label{l-main}  In any countable transitive model $V$ of ZFC$_0$, it is true that
every scattered sentence has few separable randomizations.
\end{lemma}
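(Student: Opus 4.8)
The plan is to fix, working inside $V$, a scattered sentence $\varphi$, and to prove that $\varphi$ has few separable randomizations by verifying the criterion of Fact~\ref{f-9.5}. Thus it suffices to take an arbitrary countable randomization $\cu N=(\cu K,\cu B)$ of $\varphi$ (equipped with a coding, which is a real of $V$) and to establish statement (S) of Lemma~\ref{L2} for $\cu N$ in $V$. The engine of the argument is Fact~\ref{f-Martin}, which already delivers the conclusion under MA$(\aleph_1)$; the strategy is to manufacture a universe in which MA$(\aleph_1)$ holds, verify (S) there, and then transport (S) back to $V$ by absoluteness.

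Since $V$ is countable, I would work in the ambient universe and pass to a forcing extension. Inside $V$, let $\mathbb P$ be the standard finite-support ccc iteration that forces MA$(\aleph_1)$ (simultaneously arranging $2^{\aleph_0}=\aleph_2$), and let $G$ be a $V$-generic filter for $\mathbb P$, which exists because $V$ is countable. Then $V[G]\models\text{ZFC}_0+\text{MA}(\aleph_1)$, and because $\mathbb P$ is ccc one has $\omega_1^{V}=\omega_1^{V[G]}$. This coincidence of $\omega_1$ is exactly what makes Shoenfield absoluteness available between $V$ and $V[G]$ for formulas with the real parameter $\cu N$, so it is not a technicality but the crux.

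Next I would check the hypotheses of Fact~\ref{f-Martin} in $V[G]$. By Lemma~\ref{L1}, $\cu N$ remains a countable randomization of $\varphi$ in $V[G]$. The sentence $\varphi$ also remains scattered in $V[G]$: being scattered is a $\Pi^1_2$ property, since its negation asserts the existence of a countable fragment $L_A$ (a real) together with a perfect set of countable models of $\varphi$ that are pairwise $L_A$-inequivalent, an $\exists(\text{real})\,\Sigma^1_2$ matrix; hence Shoenfield absoluteness carries ``scattered'' upward from $V$ to $V[G]$. With MA$(\aleph_1)$ holding in $V[G]$, Fact~\ref{f-Martin} gives that $\varphi$ has few separable randomizations in $V[G]$, and so Fact~\ref{f-9.5} yields that (S) holds for $\cu N$ in $V[G]$.

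Finally I would transport (S) back down. By Lemma~\ref{L2}, (S) is equivalent to a $\Sigma^1_1$ formula with parameter $\cu N$; in particular it is $\Sigma^1_2$, so Shoenfield absoluteness (applicable since $\omega_1^{V}=\omega_1^{V[G]}$ and $\cu N\in V$) yields $V\models(\text{S})$ from $V[G]\models(\text{S})$. As $\cu N$ was arbitrary, Fact~\ref{f-9.5} shows that $\varphi$ has few separable randomizations in $V$, and as $\varphi$ was an arbitrary scattered sentence of $V$, the lemma follows. The main obstacle is directional: the automatic absoluteness for $\Sigma^1_1$ runs upward, whereas here (S) must be pushed from the larger model $V[G]$ down to $V$. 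This downward transfer is precisely what forces us to regard (S) as $\Sigma^1_2$ and invoke Shoenfield, and it is why the ccc of $\mathbb P$, guaranteeing $\omega_1^{V}=\omega_1^{V[G]}$, is essential to the whole scheme.
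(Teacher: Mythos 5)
Your proof is correct and follows essentially the same route as the paper: pass to a Solovay--Tennenbaum extension $V[G]\models\mathrm{ZFC}_0+\mathrm{MA}(\aleph_1)$, transfer scatteredness upward as a $\Pi^1_2$ property via Shoenfield, invoke Lemma~\ref{L1}, Fact~\ref{f-Martin} and Fact~\ref{f-9.5} in $V[G]$ to obtain statement (S), transfer (S) back down to $V$ using Lemma~\ref{L2}, and conclude with Fact~\ref{f-9.5}. However, the feature you repeatedly single out as ``the crux'' --- that the ccc of $\mathbb{P}$, by preserving $\omega_1$, is what makes the absoluteness transfers legitimate --- is a misconception. Shoenfield absoluteness holds between $V$ and \emph{any} generic extension $V[G]$ (indeed between any transitive models $M\subseteq N$ with the same ordinals containing the real parameter); no cardinal preservation is required, so the ccc plays no role in enabling either transfer (it matters only internally to the Solovay--Tennenbaum construction itself). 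Likewise, your claim that $\Sigma^1_1$ absoluteness runs only upward, so that the downward transfer of (S) ``forces us to regard (S) as $\Sigma^1_2$ and invoke Shoenfield,'' is not right: by Mostowski absoluteness, $\Sigma^1_1$ and $\Pi^1_1$ statements are absolute in \emph{both} directions between transitive models (well-foundedness of trees is absolute), which is exactly why the paper remarks that this step needs only ``the weaker Mostowski absoluteness theorem.'' These errors do not break your argument --- in your setup $\omega_1$ happens to be preserved, and Shoenfield is stronger than what is needed --- but the dependencies you identify as essential are not the ones actually doing the work.
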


\begin{proof}
  By the result of Solovay and Tennenbaum, there is a countable transitive model $V[G]$ of ZFC$_0$ with the same ordinals as $V$ such that $V\subseteq V[G]$ and Martin's Axiom for $\aleph_1$ holds in $V[G]$.  Suppose that in $V$ it is true that $\varphi$ is a scattered sentence,  $\cu N$ is a countable
randomization  with a coding, and $\mu^{\cu N}(\l\varphi\rr)=1.$

We now work in $V[G]$, and prove the statement (S) of Lemma \ref{L2}.
The property of being a scattered sentence is $\Pi^1_2$, so by the Shoenfield absoluteness theorem,  $\varphi$ is a still scattered sentence.
By Lemma \ref{L1}, $\cu N$ is still a countable randomization with $\mu^{\cu N}(\l\varphi\rr)=1.$
So the completion of $\cu N$ is a complete separable randomization of $\varphi$.  By
Fact \ref{f-Martin} and Martin's axiom, $\varphi$ has few separable randomizations.
By Fact \ref{f-9.5}, there exists a Scott sentence $\delta$ such that $\mu^{\cu N}(\l\delta\rr)>0$, so (S) holds.

By Lemma \ref{L2} and the Shoenfield absoluteness theorem (or even the weaker Mostowski absoluteness theorem),
(S) also holds in $V$.  So by Fact \ref{f-9.5}, it is true in $V$ that $\varphi$ has few separable randomizations.
\end{proof}

\begin{proof} (Proof of Theorem \ref{t-main})  The following argument is well-known, and is included for completeness.
Let $\eta$ be the sentence in the vocabulary of ZFC that says that every scattered sentence has few separable randomizations.
Assume $\neg\eta$.  By the reflection theorem, ZFC$_0\cup\{\neg\eta\}$ has a transitive model.  By the downward L\"{o}wenheim-Skolem
theorem and the Mostowski collapsing lemma, ZFC$_0\cup\{\neg\eta\}$ has a countable transitive model.  This contradicts Lemma \ref{l-main},
so $\eta$ holds.
\end{proof}

\section*{References}


[BBHU]  Ita\"i Ben Yaacov, Alexander Berenstein,
C. Ward Henson and Alexander Usvyatsov. Model Theory for Metric Structures.  In Model Theory with Applications to Algebra and Analysis, vol. 2,
London Math. Society Lecture Note Series, vol. 350 (2008), 315-427.

[EV]    Christopher Eagle and Alessandro Vignati.  Saturation and Elementary Equivalence of $C^*$-Algebras.  ArXiv:1406.4875v4 (2015).

[J] Thomas Jech.  Set Theory.  Springer-Verlag 2003.

[K1]  H. Jerome Keisler.  Model Theory for Infinitary Logic.  North-Holland 1971.

[K2]  H. Jerome Keisler.  Randomizations of Scattered Sentences.  To appear in the forthcoming volume ``Beyond First Order Model Theory'', edited by Jose Iovino, CRC Press.
Also available online at math.wisc.edu/$\sim$keisler.

\end{document}